\newcommand{\rright}{\right}
\newcommand{\lleft}{\left}
\newtheorem{prp}{Proposition}
\newtheorem{lem}{Lemma}
\newtheorem{cor}{Corollary}
\newcommand{\Var}{\operatorname{Var}}
\newcommand{\Cov}{\operatorname{Cov}}
\renewcommand{\P}{\mathrm{\mathbb{P}}}
\newcommand{\SDP}{\operatorname{SDP}}
\newcommand{\med}{\operatorname{med}}
\newcommand{\trace}{\operatorname{tr}}
\newcommand{\eps}{\varepsilon}
\newcommand{\cN}{\mathcal{N}}
\newcommand{\cM}{\mathcal{M}}
\newcommand{\cC}{\mathcal{C}}
\newcommand{\bbR}{\mathbb{R}}
\def\eps{\varepsilon}
\newcommand{\field}[1]{\mathbb{#1}}
\newcommand{\R}{\field{R}}
\newcommand{\E}{\field{E}}
\newcommand{\EXP}{\E}
\newcommand{\PROB}{\field{P}}
\newcommand{\IND}[1]{\mathbh{1}_{\{ #1 \}}}
\newcommand{\defeq}{\stackrel{\mathrm{def}}{=}}
\newcommand{\C}{\mathcal{C}}
\newcommand{\Rho}{\rho_{\mathrm{ave}}}
\newcommand{\binom}[2]{\pmatrix{#1\cr#2}}
\newcommand{\eqref}[1]{(\ref{#1})}
\renewcommand{\pi}{\uppi}
\renewcommand{\emptyset}{\varnothing}
\newcommand{\oo}{\mathrm{o}}
\newcommand{\OO}{\mathrm{O}}
\begin{document}
\begin{frontmatter}

\title{Detecting positive correlations in a multivariate sample}
\runtitle{Detecting positive correlations in a multivariate sample}

\begin{aug}
\author[1]{\inits{E.}\fnms{Ery} \snm{Arias-Castro}\thanksref{1}\ead[label=e1]{eariasca@math.ucsd.edu}},
\author[2]{\inits{S.}\fnms{S{\'e}bastien} \snm{Bubeck}\corref{}\thanksref{2}\ead[label=e2]{sbubeck@princeton.edu}} \and
\author[3]{\inits{G.}\fnms{G{\'a}bor} \snm{Lugosi}\thanksref{3}\ead[label=e3]{gabor.lugosi@upf.edu}}
\address[1]{Department of Mathematics,
University of California, San Diego, La Jolla, CA 92093, USA.\\ \printead{e1}}
\address[2]{Department of Operations Research and Financial Engineering,
Princeton University, Princeton, NJ 08544, USA. \printead{e2}}
\address[3]{Department of Economics, Pompeu Fabra University, 08005
Barcelona, Spain.\\ \printead{e3}}
\end{aug}

\received{\smonth{7} \syear{2012}}
\revised{\smonth{3} \syear{2013}}

%
\begin{abstract}
We consider the problem of testing whether a correlation matrix of a
multivariate normal population is the identity matrix. We focus on
sparse classes of alternatives where only a few entries are nonzero
and, in fact, positive. We derive a general lower bound applicable to
various classes and study the performance of some near-optimal tests.
We pay special attention to computational feasibility
and construct near-optimal tests that can be computed efficiently.
Finally, we apply our results to prove new lower bounds for the clique number
of high-dimensional random geometric graphs.
\end{abstract}

%
\begin{keyword}
\kwd{Bayesian detection}
\kwd{high-dimensional data}
\kwd{minimax detection}
\kwd{random geometric graphs}
\kwd{sparse covariance matrix}
\kwd{sparse detection}
\end{keyword}

\end{frontmatter}

\section{Introduction}

In multivariate statistics, inference about a covariance (i.e.,
dispersion) matrix aims at answering questions of dependencies between
the variables. This is strictly true when the variables are jointly
Gaussian, which is the classical assumption. A basic question is
whether the variables are dependent at all. Concretely, consider a
simple setting where the components of a random vector
are jointly normal, each with zero
mean and unit variance. Then the variables are independent if and
only if their covariance matrix is the identity matrix.
As usual, inference is based on an i.i.d. sample of size
$m$, denoted $X_1,\dots, X_m$ with
$X_t = (X_{t,1}, \dots, X_{t,n}) \in\R^n$ for $t=1,\ldots,m$. As
stated above, we assume that $\E X_{t,i} = 0$ and
$\Var(X_{t,i}) = 1$, and let $\sigma_{i,j} = \Cov(X_{t,i}, X_{t,j})$.

We are interested in testing whether the population covariance matrix
is the identity matrix, or not, so the null hypothesis is
\[
H_0 \dvt  \sigma_{i,j} = 0\qquad  \forall i \neq j .
\]
This testing problem is well studied in the classical regime where the
dimension $n$ is fixed and the sample size $m$ increases to
infinity. See, for example, Muirhead \cite{MR652932}, Section~8.4, where
the generalized likelihood ratio test (GLRT) -- against the alternative
hypothesis $H_1\dvt  \sigma_{i,j} \neq0$ for some $i \neq j$ -- is studied
in detail, as well as some unbiased variant.
When the dimension is large (i.e., $n \to\infty$), the GLRT may be
degenerate. This is discussed in detail in
Ledoit and Wolf \cite{MR1926169}, where other tests -- including a
new one -- are examined for consistency in this high-dimensional
regime. Their ideas are further explored in
Srivastava \cite{MR2328427}, Fisher \cite{Fisher11} and Chen, Zhang
and Zhong \cite{MR2724863}. All these tests are based on symmetric
polynomials of the sample correlation coefficients. We discuss this in
Section~\ref{sec:squared-sum}. These tests are shown to be consistent
when $n/m
\to c \in(0, \infty)$ under additional mild conditions.

While these papers focus on general consistency, our focus is on
alternatives where the covariance matrix is sparse, meaning that even
under the alternative hypothesis, only a few variables are
substantially correlated.
This sparse setting has been investigated in the last few years,
with recent work on the estimation of sparse covariance
matrices, see El Karoui \cite{MR2485011}, Bickel and
Levina \cite{MR2485008,MR2387969}
and Cai, Zhang and Zhou \cite{MR2676885}.
To our knowledge, testing for sparse correlation structures in a
multivariate sample has not been considered in detail before. This is
what we study here.

\subsection{Correlation models}
\label{sec:models}

We introduce sparse models of correlation matrices to test against.
Though many more models are possible, we choose a few emblematic examples
that are of interest in a much wider sense within the literature on
sparse covariance estimation. 
In all cases, the null hypothesis is that the observed vector
has identity covariance matrix. For the alternative hypothesis, we
consider the following prototypical examples:

\begin{itemize}
\item\textit{Block model}.
The covariance under the alternative hypothesis is the
identity matrix except for a $k \times k$ block on the diagonal.
Formally, given $\rho> 0$, we assume here that there is a subset of
indices of the form $S = \{i, \dots, i+k-1\}$ modulo $n$ -- for
aesthetic reasons -- such
that $\sigma_{i,j} \geq\rho$ if $i,j \in S, i \neq j$.
The set $S$ is called the \textsl{anomalous set}.
\item\textit{Clique model}. This model is defined as the block model
with the possible anomalous set $S$ ranging over all the
subsets of indices of size $k$.
\item\textit{Perfect matching model}. Suppose $n$ is a perfect square
with $n = k^2$. Here the components of the observed vector $X$
correspond to edges of the complete bipartite graph on $2k$
vertices. The alternative hypothesis is that the bipartite graph
has a perfect matching such that $\sigma_{i,j} \geq\rho$ for all
$i,j \in S, i \neq j$ where $S$ is the anomalous set of indices corresponding
to the edges of the perfect matching.
\end{itemize}
All through, we assume that $k < n/2$.
In fact, we will most interested in the regime where $k = \oo(n)$, where
the anomaly only affects a negligible fraction of the variables.

The block model is closely related to the models used
in Cai, Zhang and Zhou \cite{MR2676885} to obtain bounds on the
minimax risk
of estimating sparse matrices. Roughly speaking, Cai, Zhang and Zhou
use the block model with $S = \{1, \dots, k\}$ and place nonzero entries
in a (carefully designed) fashion within that block. The fraction of
nonzero entries within the block is about one-half. We could also
assume that only a fraction of the entries in the block are
nonzero and it would only change constants later on. More
importantly, to make the detection problem interesting, we need to
consider all possible blocks. Note that the block model is
parametric.
The clique model is a natural generalization of the block model
leading to a nonparametric model.
%
The perfect matching model gives an example of a class of sets
with a more intricate combinatorial structure which our approach
is able to deal with.

\subsection{Tests and their risks}
\label{sec:risk}

As usual, a \textsl{test} is a binary-valued function $f\dvtx \R^{nm} \to
\{0,1\}$, with $f(X_1, \dots, X_m)=1$ meaning that the test rejects
the null hypothesis $H_0$ in favor of the particular alternative
hypothesis of interest. We measure the performance of a test based on
its \textsl{worst-case risk} over the model of interest $\cM$,
formally defined by
\[
R^{\max}(f)= \PROB_0\bigl\{f(X_1, \dots,
X_m) =1\bigr\} + \sup_{M \in\cM} \PROB_M
\bigl\{f(X_1, \dots, X_m)= 0\bigr\} ,
\]
where $\PROB_0$ denotes the distribution under the null hypothesis,
while $\PROB_M$
denotes the distribution under the alternative hypothesis associated
with a
particular covariance structure $M$.
In our setup, $R^{\max}(f)$ depends on $n,m,\rho$, and the class
$\cC$ of possible index sets $S \subset\{1, \ldots, n\}$.
When all non-zero covariances $\sigma_{i,j}$
are actually equal to the lower bound $\rho$,
then $\PROB_M$ is determined by $S$ and,
with a slight abuse of notation, we write
$\PROB_S$ for $\PROB_M$. Clearly,
\[
R^{\max}(f) \ge\PROB_0\bigl\{f(X_1, \dots,
X_m) =1\bigr\} + \max_{S \in\cC} \PROB_S
\bigl\{f(X_1, \dots, X_m)= 0\bigr\}
\]
and indeed all lower bounds derived in this paper start with this inequality.
We will derive upper and lower bounds for the \textsl{minimax risk},
\[
R_*^{\max} := \inf_{f} R^{\max}(f) ,
\]
where the infimum is taken over all measurable functions
$f\dvtx \R^{nm} \to\{0,1\}$.

The lower bounds will be obtained by putting a prior on
model $\cC$ and obtaining a lower bound on the corresponding
\textsl{Bayesian risk} which never exceeds the worst-case risk. In all
cases, we draw the set $S$ uniformly at random within the
class $\cC$. The upper bounds are obtained by studying the
performance of specific tests.

We focus on the case where the dimension $n$ and the sample size $m$
are both large. Of course, such asymptotic statements only make sense
if we define sequences of integers $m=m_n$, $k=k_n$, positive
reals $\rho=\rho_n$, and
classes $\cC=\cC_n$. This dependency in $n$ will be left implicit.
In this asymptotic setting, we say that \textsl{reliable} detection is
possible (resp. impossible) if $R_*^{\max}\to0$ (resp. $\to1$) as
$n \to\infty$. Also we say that a sequence of tests $(f_n)$ is
asymptotically powerful (resp. powerless) if $R^{\max}(f_n) \to0$
(resp. $\to1$).

\subsection{A preview of results for the clique model}

Among the models we consider, the clique model is perhaps the most
compelling because of its relevance in applications and its
complexity.
Also, for a given value of $k$, the clique model is the richest possible
and therefore for any given $\rho,n,m$, $R_*^{\max}$ is larger than
for any other model. This makes the clique model an important benchmark.

Here we summarize our main findings for this special class.
We discover various types of behavior in distinct ranges
of the parameters $n,m,k,\rho$.
Roughly speaking, and ignoring logarithmic factors, we arrive at
the following conclusions. Two tests are competing for
near-optimality. The first one is a ``global'' test akin to the
classical test Muirhead \cite{MR652932}, Section~8.4, and the refinements
in Chen, Zhang and Zhong \cite{MR2724863} and Ledoit and Wolf \cite
{MR1926169}. The second is a ``local'' test reminiscent to
the generalized likelihood ratio test. The latter dominates the
former when
\[
\max \biggl(\frac{k^{3/2}}{n}, \frac{k^2}{n \sqrt{m}} \biggr) 
\]
is small, corresponding to smaller values of $k$.

Our results also uncover an interesting phase-transition phenomenon as
the sample size increases.
Indeed, we find that, when the sample size $m$ is sub-logarithmic in
the dimension $n$ -- meaning $m = \oo(\log n)$ -- reliable testing is only
possible if $\rho\to1$ or $k^2/n \to\infty$, which is what we found
in our previous work \cite{ArBuLu11} when $m = 1$.
The situation becomes drastically different when $m$ is at least a sufficiently
large constant multiple of $\log n$, where we learn that reliable
detection becomes possible in some settings where $\rho\to0$ and
$k=2$ -- as long as $\rho\to0$ sufficiently slowly.

%
%

\subsubsection{Computational considerations}

The ``local'' test that achieves near-optimal behavior in a large
range of the parameters is a scan statistic that requires the
computation of a maximum over all ${n \choose k}$ subsets of components
of size~$k$. In its naive implementation, this test is
computationally intractable, unless $k$ is very small.
We also believe that computing this test is a fundamentally hard
computational problem. We do not have a rigorous argument
to prove such a hardness result but it is worth pointing out
that the problem is quite similar, in spirit, to the notoriously
difficult \textsl{hidden clique problem}, see Alon, Krivelevich and
Sudakov \cite{AlKrSu99}.

What performance can we achieve with limited computational power?
Such questions of trade-off between statistical performance and
computational complexity are at the heart of high-dimensional
statistics and machine learning. We probe this question and describe
a family of tests that balances detection performance and
computational complexity.

In particular, in Section~\ref{sec:maxsum} we design a test
that achieves near-optimal performance (similar to that of the scan
statistic)
and may be computed in polynomial-time in $n$ when $m = \OO(\log n)$,
$\rho$ is a constant, and $k\sim n^a$ for some $a\in(0,1/2)$.
In Section~\ref{sec:SDP}, we discuss another computationally efficient test
based on a convex relaxation of the local test.

\subsubsection{An application in the study of random geometric graphs}

In Section~\ref{sec:cliquergg}, we apply the lower bound for the
optimal risk in the clique model in a perhaps unexpected context
and derive a new lower bound for the clique number of a
high-dimensional random geometric graph. The setup is as follows.

Consider a random geometric graph on the unit sphere in dimension $m$.
The graph has $n$ vertices, each corresponding to a random point on
the unit sphere. Two vertices are connected by an edge if the inner
product of the corresponding points is positive. In a recent paper,
Devroye, Gy\"orgy, Lugosi and Udina \cite{DeGyLuUd11} studied the
clique number (i.e., the size of the
largest clique in the graph) $\omega(n,m)$ of such a graph in various
regimes. They showed that when $m \sim c\log n$ for a sufficiently
small constant $c$, $\omega(n,m)=n^{1-\oo(1)}$ with high probability,
while when $m \ge9 \log^2 n$, $\omega(n,m) = \OO(\log^3 n)$. However,
nothing was known about the behavior of the clique number in
between. In particular, it was unclear where exactly the clique number
becomes polylogarithmic. In Section~\ref{sec:cliquergg}, we show that the
phase transition occurs at $m \asymp\log^2 n$.
(We use the notation $a_n \asymp b_n$ when $(a_n)$ and $(b_n)$ are two
sequences such that $a_n = \OO(b_n)$ and $b_n = \OO(a_n)$.)
In particular, we
prove that for all $c>0$, when $m \sim c\log n$, then the median of
$\omega(n,m)$ grows as a positive power of $n$ and even for $m \asymp
\log^{2-\epsilon} n$, the median of $\omega(n,m)$ grows faster than
any power of $\log n$, for all $\epsilon>0$.

\subsection{More related work}

As mentioned before, the literature on \textsl{sparse} covariance
estimation
has become quite extensive. In spite of this surge of interest in sparse
high-dimensional models, not much has been done in terms of detection
of correlations.
We note the work
of Verzelen and Villers \cite{MR2604699}, who consider the task of
testing a given
dependency structure. Our objective here is admittedly more modest
and a more closely related is our own paper \cite{ArBuLu11}, which
focuses entirely on the case where the sample size is equal to one
(i.e., $m = 1$). Our results here are seen to extend those in the
one-sample case, with the regimes now partitioned according to the
sample size.

Note that our work is different from Butucea and Ingster \cite
{ingster} where the task is
the detection of a submatrix with higher per-coordinate mean in a
large matrix with i.i.d. Gaussian entries, which is more closely
related to the literature on the detection of sparse nonzero entries
in the mean of a random vector. Our work has parallels with that
literature which, for the clique model, focuses on the
``detection-of-means''
problem
(see Jin \cite{JinPhD}, Ingster \cite{Ingster99},
Baraud \cite{baraud},
Donoho and Jin \cite{dj04}, Hall and Jin \cite{hj09},
Arias-Castro, Cand{\`e}s, Helgason and Zeitouni \cite{maze},
Addario-Berry, Broutin, Devroye and Lugosi \cite{combin})
defined as follows:
Under the null hypothesis,
the vectors $X_t$ are i.i.d. standard normal, while under the
alternative hypothesis, there is a subset $S \subset\{1, \ldots, n\}$
in some class $\cC$ of interest
such that the $X_t$ are i.i.d. normal with mean $(\mu_1, \dots, \mu_n)^T$
and identity covariance, where $\mu_i \ge\mu$ for $i \in S$ and
$\mu_i=0$ for $i \notin S$. Thus,
$\mu> 0$ is the minimum (per-coordinate) signal amplitude. Of
course, one immediately reduces by sufficiency to the case $m=1$ by
averaging over the sample. This explains why the literature focuses on
the case $m=1$.
The connection between the detection-of-means problem with
the correlation detection problem studied here
was detailed (for $m=1$) in our previous paper \cite{ArBuLu11}, where
$\rho$ was found to correspond to $\mu^2$.
The connection is based on the following simple
representation of equi-correlated normal random variables.

\begin{lem}[(Berman \cite{MR0145564})]
\label{lem:represent}
Let $X_1, \dots, X_k$ be standard normal random variables with
$\Cov(X_i,\allowbreak  X_j) = \rho>0$ for $i \neq j$. Then there are
independent standard normal random variables $V, Y_1, \dots, Y_k$
such that $X_i = \sqrt{\rho} V + \sqrt{1-\rho} Y_i$ for all $i$.
\end{lem}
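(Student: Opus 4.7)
The plan is to prove the representation by reducing it to an equality of Gaussian distributions via matching mean and covariance. Since the lemma asserts the existence of independent standard normals $V, Y_1, \dots, Y_k$ realizing the representation, I would work on an enlarged probability space: introduce $V, Y_1, \dots, Y_k$ as independent standard normal random variables by construction, and then verify that the vector $(X_1', \dots, X_k')$ defined by $X_i' := \sqrt{\rho}\, V + \sqrt{1-\rho}\, Y_i$ has the same joint law as the given $(X_1, \dots, X_k)$.

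The key observation is that both vectors are jointly Gaussian with mean zero, so equality of distribution reduces to equality of covariance matrices. For the candidate vector a direct calculation gives $\Var(X_i') = \rho\,\Var(V) + (1-\rho)\,\Var(Y_i) = \rho + (1-\rho) = 1$, and for $i \neq j$, the independence of $V, Y_i, Y_j$ yields
\[
\Cov(X_i', X_j') = \rho\,\Var(V) + \sqrt{\rho(1-\rho)}\,\Cov(V, Y_j) + \sqrt{\rho(1-\rho)}\,\Cov(Y_i, V) + (1-\rho)\,\Cov(Y_i, Y_j) = \rho.
\]
This matches the covariance structure of $(X_1, \dots, X_k)$ exactly. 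Since the distribution of a mean-zero jointly Gaussian vector is determined by its covariance matrix, the two vectors are equal in distribution, which is what the lemma asserts.

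There is essentially no obstacle here; the only point worth flagging is that the square roots $\sqrt{\rho}$ and $\sqrt{1-\rho}$ are well defined, which uses $\rho \in (0,1]$: positivity is in the hypothesis, while $\rho \le 1$ is forced by positive semi-definiteness of the given covariance matrix (a covariance with $1$'s on the diagonal and $\rho$ off-diagonal has eigenvalue $1 + (k-1)\rho$ on the all-ones direction and $1-\rho$ on its orthogonal complement). This confirms the construction makes sense and completes the proof sketch.
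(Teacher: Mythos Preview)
Your proof is correct and is the natural argument: construct the candidate representation and verify that the resulting Gaussian vector has the same mean and covariance, hence the same law. The paper does not actually prove this lemma; it simply cites it as a known fact from \cite{MR0145564}, so there is nothing to compare against beyond noting that your sketch is the standard verification one would expect.
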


Thus, given $V$, the problem becomes that of detecting a subset of
variables -- here implicitly assumed to be indexed by
$S = \{1,\ldots, k\}$ -- with nonzero mean (equal to $\sqrt{\rho} V$) and
with a variance equal to $1-\rho$ (instead of $1$). This representation
was used in \cite{ArBuLu11} to obtain a general lower bound that
seemed otherwise out of reach of more standard methods based on the
second moment of the likelihood ratio.

This connection with the detection-of-means problem
also applies in the case where $m > 1$, but with a
twist. Indeed, when detecting correlations one does not average the
vectors $X_t$ but their covariances. So a simple reduction to the
case $m=1$ does not apply. However,
one may still apply the representation result Lemma~\ref{lem:represent}
to each
observation vector $X_t$, yielding $V_t$'s and $Y_{t,i}$'s that are
independent standard normal random variables.
By conditioning on $V_1, \dots, V_m$, the problem becomes equivalent
to detecting a subset of variables with means $\sqrt{\rho} V_t,
t=1,\dots,m$. What makes the situation more complex is that
the signs of the $V_t$'s are
random. Our approach to finding a general lower bound is based
on this representation without which more standard
methods seem to fail. The general lower bound, which is the key
technical
result of this paper, is given in Theorem~\ref{thm:lower} below.

\subsection{Contribution and content of the paper}

We obtain a general lower bound in Section~\ref{sec:lower} akin to,
but not a
straightforward extension of, the lower bound we obtained in
\cite{ArBuLu11}. We then study a number of tests that are
near optimal in the sense that they come close to achieving
the detection lower bound for various models.
This is done in Section~\ref{sec:tests}.
We then specialize these general results in Sections~\ref{sec:clique},
\ref{sec:block} and \ref{sec:matching}, to the three models described in
Section~\ref{sec:models}.
We also discuss computational issues, particularly in the clique
model.
In Section~\ref{sec:cliquergg}, we apply our general lower bound
to the problem of studying the size of the clique number of a random
geometric graph on a high-dimensional sphere. We close the paper with
a discussion in Section~\ref{sec:discussion} of possible extensions and
challenges.


\section{Lower bounds}
\label{sec:lower}

In this section, we derive a general lower bound for the minimax risk
$R_*^{\max}$. As mentioned in Section~\ref{sec:risk}, the first step
is to restrict the supremum in the definition of $R^{\max}(f)$
to covariance matrices in which all the nonzero
entries are equal to $\rho> 0$ and then lower bound the maximum
by an average. In particular, we have $R_*^{\max} \ge R^*$ where
$R^* = \inf_f R(f)$ and
\[
R(f) \defeq\PROB_0\bigl\{f(X_1, \dots,
X_m) =1\bigr\} + \frac{1}{|\cC|} \sum_{S \in\cC}
\PROB_S\bigl\{f(X_1, \dots, X_m)= 0\bigr\} .
\]
Note that $R^*$ is just the Bayes risk for the uniform prior
on the models $S \in\cC$.
It is well known that the test $f^*$ that achieves the infimum
(i.e., $R(f^*)=R^*$) is the \textsl{likelihood ratio test} with
critical value 1, and there is a whole machinery that can be used to
bound that risk from below.

The following lower bound has a similar flavor as the main result in
our previous work \cite{ArBuLu11}. In particular, we make appear some
moment of $Z$, a random variable that represents the size of the
overlap of two index sets taken at random from the class $\cC$. A
straightforward adaptation of the arguments we used in \cite
{ArBuLu11} leads to a lower bound in terms of the moment generating
function of $Z$. Here, unfortunately, this quantity is too large to
obtain sharp results in most regimes. The key contribution of the
following result is to replace the exponential function by the
hyperbolic cosine. This allows us to derive much sharper results,
essentially because around $0$ one has $\exp(x) - 1 \sim x$ while
$\cosh(x) - 1 \sim\frac{x^2}{2}$.
%
%
\begin{thm}
\label{thm:lower}
For any class $\cC$, any $\rho\in(0, 1)$, and any $a \ge\sqrt{3}$,
%
\begin{equation}
\label{lower} R^* \geq\P \bigl(\chi_m^2 \le m
a^2 \bigr) \biggl(1 - \frac{1}2 \sqrt{\E\min \bigl[\exp (m
\nu_a Z ), \cosh^m(\xi_a Z) \bigr] - 1}
\biggr),
\end{equation}
where
\[
\nu_a := \frac{\rho a^2}{1+\rho} - \frac{1}2 \log\bigl(1-
\rho^2\bigr)\quad  \mbox{and}\quad  \xi_a := \frac{\rho a^2}{1-\rho^2},
\]
and where $\chi_m^2$ has chi-squared distribution with $m$ degrees of
freedom, and $Z=|S \cap S'|$ with $S, S'$ i.i.d. uniform from $\cC$.
\end{thm}
%
%
\begin{pf}
Following \cite{ArBuLu11} we start the proof by using the
representation of the data given by Lemma~\ref{lem:represent}.
Under the alternative hypothesis $H_1$, $X \in\R^{m \times n}$ can be
written as
%
\begin{equation}
\label{eq:representation} X_{t,i} = \lleft\{ %
\begin{array} {l@{\qquad}l}
Y_{t,i} & \mbox{ if } i \notin S, t \in[m],
\\
\sqrt{\rho} V_t + \sqrt{1-\rho} Y_{t,i} & \mbox{ if } i
\in S, t \in[m],
\end{array} %
\rright.
\end{equation}
where, for any positive integer $m$, $[m] := \{1, \dots, m\}$, and
$(Y_{t,i})_{i \in[n], t \in[m]}, (V_t)_{t \in[m]}$ are i.i.d.
standard normal random variables.

In our previous paper \cite{ArBuLu11}, we conditioned on $V := (V_1,
\dots, V_m)$.
Here, instead, we condition on $U := (U_1, \dots, U_m)$, where $U_t := |V_t|$.
Let $\eps_t = \operatorname{sign}(V_t)$, which are i.i.d. Rademacher.
We consider now the alternative hypothesis $H_1(u)$, defined as the
alternative hypothesis
$H_1$ given $U = u \in\bbR^m$. Let $R(f)$, $L$, $f^*$
(resp. $R_u(f)$, $L_u$, $f_u^*$) be the risk of a test $f$, the
likelihood ratio, and the optimal (likelihood ratio) test, for $H_0$
versus $H_1$ (resp. $H_0$ versus $H_1(u)$). For any $u \in\R^m$,
$R_u(f_u^*) \leq R_u(f^*)$, by the optimality of $f_u^*$ for $H_0$
vs. $H_1(u)$. Therefore, conditioning on $U$,
\[
R^* = R\bigl(f^*\bigr)= \E_{U} R_U\bigl(f^*\bigr) \geq
\E_{U} R_U\bigl(f_U^*\bigr) = 1 -
\tfrac
{1}{2} \E_{U} \E_0 \bigl|L_U(X) - 1\bigr| .
\]
($\E_U$ is the expectation with respect to $U$.)
Using the fact that
$\E_0 |L_u(X) - 1| \le2$ for all $u$, we have (with $B(0,a)$ being
the euclidean ball of radius $a$ in $\R^m$)
\[
\E_{U} \E_0 \bigl|L_U(X) - 1\bigr| \le2\PROB\bigl\{\Vert U
\Vert >a \sqrt{m}\bigr\} + \PROB\bigl\{\Vert U\Vert \le a \sqrt{m}\bigr\} \max
_{u \in B(0,a \sqrt{m})} \E_0 \bigl|L_u(X) - 1\bigr| .
\]
Therefore, using the Cauchy--Schwarz inequality,
\begin{eqnarray*}
1 - \frac{1}{2} \E_{U} \E_0 \bigl|L_U(X) -
1\bigr| & \ge& \PROB\bigl\{\Vert U\Vert \le a \sqrt{m}\bigr\} \biggl( 1-\frac{1}{2}\max
_{u \in B(0,a
\sqrt{m})} \E_0 \bigl|L_u(X) - 1\bigr| \biggr)
\\
& \geq& \PROB\bigl\{\Vert U\Vert \le a \sqrt{m}\bigr\} \biggl(1 -
\frac{1}2 \max_{u \in
B(0,a \sqrt{m})} \sqrt{\E_0
L_u^2(X) - 1} \biggr) .
\end{eqnarray*}

We turn our attention to bounding $\E_0 L_u^2(X)$ from above. Let
$L_{u, \eps, S}(x)$ denote the likelihood ratio when $S$ is anomalous,
given $u$ and $\eps$, which is equal to
\[
L_{u, \eps, S}(x) = \frac{1}{(1-\rho)^{m k/2}} \exp \Biggl(\sum
_{t=1}^m \sum_{i \in
S}
\frac{x_{t,i}^2}{2} - \frac{(x_{t,i} - \sqrt{\rho} \eps_t
u_t)^2}{2 (1-\rho)} \Biggr).
\]

Since $L_u(x) = \E_\eps\E_S L_{u, \eps, S}(x)$, by Fubini's
theorem, we have
\[
\E_0 L_u(X)^2 = \E_{S, S'}
\E_{\eps,\eps'} \E_0 L_{u, \eps,
S}(X) L_{u, \eps', S'}(X),
\]
where $\eps, \eps'$ are i.i.d. Rademacher vectors and $S,S'$ are
i.i.d. uniform in the class $\cC$. We have
\[
L_{u, \eps, S}(x) L_{u, \eps', S'}(x) = (1-\rho)^{-m k} \exp
\bigl(H_1(x) + H_2(x) + H_3(x)\bigr),
\]
where
\begin{eqnarray*}
H_1(x) &:=& \sum_{t=1}^m \sum
_{i \in S \cap S'} x_{t,i}^2 -
\frac
{(x_{t,i} - \sqrt{\rho} \eps_t u_t)^2}{2(1-\rho)} - \frac{(x_{t,i}
- \sqrt{\rho} \eps'_t u_t)^2}{2(1-\rho)},
\\
H_2(x) &:=& \sum_{t=1}^m \sum
_{i \in S \setminus S'} \frac
{x_{t,i}^2}{2} - \frac{(x_{t,i} - \sqrt{\rho} \eps_t
u_t)^2}{2(1-\rho)},
\\
H_3(x) &:=& \sum_{t=1}^m \sum
_{i \in S' \setminus S} \frac
{x_{t,i}^2}{2} - \frac{(x_{t,i} - \sqrt{\rho} \eps'_t
u_t)^2}{2(1-\rho)} .
\end{eqnarray*}
Let $Z = |S \cap S'|$.
We see that $H_1(X), H_2(X), H_3(X)$ are independent of each other
under the null hypothesis with
\[
\E_0 \exp\bigl(H_2(X)\bigr) = (1-\rho)^{m |S \setminus S'|/2} =
(1-\rho)^{m
(k - Z)/2}
\]
and similarly for $\E_0 \exp(H_3(X))$, while
\[
\E_0 \exp\bigl(H_1(X)\bigr) = \biggl(
\frac{1-\rho}{1+\rho} \biggr)^{m Z / 2} \exp \Biggl(\frac{\rho Z}{1-\rho^2} \sum
_{t=1}^m \eps_t
\eps'_t u_t^2 -
\frac{\rho^2 Z}{1-\rho^2} \| u\|^2 \Biggr).
\]
For the latter, we used the fact that $\eps_t^2 = {\eps_t'}^2 = 1$,
to get
\begin{eqnarray*}
&& \int_\R\exp \biggl(x_{t,i}^2 -
\frac{(x_{t,i} - \sqrt{\rho} \eps_t u_t)^2}{2(1-\rho)} - \frac{(x_{t,i} - \sqrt{\rho} \eps'_t u_t)^2}{2(1-\rho)} \biggr) \exp\bigl(-x_{t,i}^2/2
\bigr) \frac{\mathrm{d}x_{t,i}}{\sqrt{2 \pi}}
\\
&&\quad = \int_\R\exp \biggl(\frac{\rho(\eps_t \eps'_t -\rho)
u_t^2}{1-\rho^2} -
\frac{1+\rho}{2(1-\rho)} \biggl(x_{t,i} - \frac{\sqrt{\rho}}{1+\rho} \bigl(
\eps_t + \eps'_t\bigr) u_t
\biggr)^2 \biggr) \frac{\mathrm{d}x_{t,i}}{\sqrt{2 \pi}}
\\
&&\quad = \sqrt{\frac{1-\rho}{1+\rho}} \exp \biggl(\frac{\rho(\eps_t \eps'_t -\rho) u_t^2}{1-\rho
^2} \biggr) ,
\end{eqnarray*}
where the last line comes from a simple change of variables.
Hence,
\[
\E_0 L_{u, \eps, S}(X) L_{u, \eps', S'}(X) = \bigl(1-
\rho^2\bigr)^{-mZ / 2} \exp \Biggl(\frac{\rho Z}{1-\rho^2} \sum
_{t=1}^m \eps_t
\eps'_t u_t^2 -
\frac{\rho^2 Z}{1-\rho^2} \|u\|^2 \Biggr). 
\]
Let $\xi= \xi_1 = \rho/(1-\rho^2)$.
Since $(\eps_t \eps'_t \dvt  t = 1, \dots, m)$ are i.i.d. Rademacher, we have
\[
\E_{\eps, \eps'} \exp \Biggl(\xi Z \sum_{t=1}^m
\eps_t \eps'_t u_t^2
\Biggr) = \prod_{t=1}^m \cosh \bigl(\xi Z
u_t^2 \bigr) ,
\]
so that 
\[
\E_{\eps, \eps'} \E_0 L_{u, \eps, S}(X) L_{u, \eps', S'}(X) =
\bigl(1-\rho^2\bigr)^{-mZ / 2} \prod
_{t=1}^m \cosh\bigl(\xi Z u_t^2
\bigr) \exp\bigl(- \rho \xi Z u_t^2\bigr) .
\]
%
Holding $Z \ge1$ fixed, we maximize this over $\|u\|^2 = \sum_t u_t^2
\le a^2 m$ using Lagrangian multipliers and checking the
Karush--Kuhn--Tucker conditions, finding that at a local maximum all
$u_t^2$ must be equal.
Hence,
%
\begin{eqnarray}
\label{max0}\prod_{t=1}^m \cosh\bigl(\xi Z
u_t^2\bigr) \exp\bigl(- \rho\xi Z u_t^2
\bigr) &\le& \Bigl(\max_{0 \leq c \leq\xi_a Z} \cosh(c) \exp(- \rho c)
\Bigr)^m
\\
\label{max1}&=& \max \bigl(1, \cosh^m(\xi_a Z) \exp(- m \rho
\xi_a Z) \bigr),
\end{eqnarray}
where the last equality comes from the fact that the function
$h_\rho(c) := \cosh(c) \exp(- \rho c)$ is decreasing on $(0, \rho)$ and
increasing on $(\rho, \infty)$, so that its maximum over $[0,\xi_a
Z]$ is either at
$c = 0$ or $c = \xi_a Z$.
Straightforward calculations lead to
\[
h_\rho(c) > 1 \quad \Leftrightarrow\quad  g(c) := \frac{1}c \log\cosh(c)
> \rho.
\]
Since $g(c) > 1 - \frac{1}c \log2$, the maximum of $h_\rho(c)$ over $c
\in[0,\xi_a Z]$ is at $c = \xi_a Z$ when
\[
1 - \frac{1}{\xi_a Z} \log2 \ge\rho\quad \Leftrightarrow\quad  a^2 Z \ge
\frac
{1+\rho}\rho\log2.
\]
Since we consider $Z \ge1$, the last inequality is true if $\rho\ge
1/2$ and $a^2 \ge3 \log(2)$.
This inequality is far off when $\rho$ is small, so we need to derive
another bound.
Noting that $g$ is seen to be strictly increasing on $(0,\infty)$ with
range $(0,1)$, $w(\rho) := g^{-1}(\rho)$ is well defined and, as a
function of $\rho$, is infinitely differentiable and strictly increasing.
Elementary calculations show that
\[
w' = \frac{w}{\tanh(w) - \rho},\qquad  w'' =
\frac{w \tanh^2(w)}{(\tanh
(w) -\rho)^2},
\]
implying in particular that $w$ is convex.
Also, since $g(c) \ge\frac{1}c \log(1 + \frac{c^2}2)$ -- because $\mathrm{e}^c
\ge1 + c + c^2/2$ for all $c \ge0$ and $\mathrm{e}^c \ge1 + c$ for all $c \in
\bbR$ -- we have $g(2) \ge\frac{1}2 \log(3) > \frac{1}2$ and therefore
$w(1/2) < 2$.
Hence, by convexity of $w$, we have $w(\rho) \le w(1/2) \rho< 2 \rho
$ for all $\rho\le1/2$.
Now, the maximum in \eqref{max0} at $c = \xi_a Z$ when
\[
w(\rho) < \xi_a Z \quad \Leftarrow\quad 2 \rho\le\frac{\rho a^2}{1-\rho^2} Z
\quad \Leftrightarrow\quad 2 \bigl(1-\rho^2\bigr) \le a^2 Z.
\]
When $Z\ge1$, the latter is true when $\rho\le1/2$ and $a^2 \ge3/2$.
Hence, given that $a^2 \ge3$ by assumption, the maximum in \eqref
{max0} at $c = \xi_a Z$.

We therefore arrive at
\[
\max_{u \in B(0,a \sqrt{m})} \E_0 L_u^2(X)
\le\E \bigl( \bigl(1-\rho ^2\bigr)^{-mZ / 2}
\cosh^m(\xi_a Z) \exp(- m \rho\xi_a Z) \bigr)
.
\]
We then have
\[
\cosh^m(\xi_a Z) \exp(- m \rho\xi_a Z) \le
\exp\bigl(m (1-\rho) \xi _a Z\bigr) = \exp \biggl( \frac{m \rho a^2}{1+\rho}
Z \biggr).
\]
And also
\begin{eqnarray*}
\bigl(1-\rho^2\bigr)^{-mZ / 2} \exp(- m \rho\xi_a
Z) &\le&\exp \biggl(- \frac{m Z}{2 (1 -\rho^2)} \bigl[2 \rho^2 + \bigl(1-
\rho^2\bigr) \log\bigl(1-\rho^2\bigr) \bigr] \biggr)
\\
&\le&1,
\end{eqnarray*}
where in the first line we used $a^2 \ge1$ and in the second line the
fact that $s + \frac{1-s}{2} \log(1-s) \ge0$ for all $s \in(0,1)$.
With this, we conclude.
\end{pf}

In Sections~\ref{sec:clique}, \ref{sec:block}, and \ref{sec:matching},
we specialize Theorem~\ref{thm:lower} to the
different models we described in Section~\ref{sec:models}.

\section{Tests}
\label{sec:tests}

In this section, we introduce and briefly discuss two natural tests
that will be seen to perform near optimally in various regimes of the
parameters. This optimality property will be established in Sections~\ref{sec:clique}, \ref{sec:block} and \ref{sec:matching}, by
comparing simple performance bounds with the implications of
Theorem~\ref{thm:lower}.

The first test, that we call ``squared-sum test'',
is based on a global test statistic that
does not take the class $\cC$ into account at all.

The second test, a ``localized'' squared-sum test,
is based on a simple scan statistic. It may also be interpreted
as a simplified version of the generalized
likelihood ratio test.

As we will see, one of the two tests above always has a near-optimal
performance in all three specific classes we discuss.
Thus, the story is
essentially complete for the point of view of detection performance.
Unfortunately, when the class $\cC$ is large -- as in the clique
model --, the localized squared-sum test is computationally unfeasible,
at least in its naive implementation. We discuss two possible
substitutes. The first one is
a simple ``maximum correlation test'' that turns out to be nearly
optimal for very small values of $k$. In Section~\ref{sec:maxsum}, we discuss
another test in the context of the clique model that is both
near-optimal and computationally feasible when the sample size is at
most logarithmic in the dimension $n$.
In Section~\ref{sec:SDP}, a conceptually different computationally efficient
alternative is discussed.

All performance bounds derived below are in terms of the average correlation
%
\begin{equation}
\label{Rho} \Rho= \frac{1}{k(k-1)} \sum_{i,j \in S: i \neq j}
\sigma_{i,j} \ge \rho,
\end{equation}
where $S$ is the anomalous set.

\subsection{The squared-sum test}
\label{sec:squared-sum}

Let $\Sigma$ denote the covariance matrix of the distribution of
$X_1$. When
the alternative hypothesis is simply $H_1\dvt  \Sigma\neq I$ (where $I$ is the
identity matrix), without any sign restriction on the entries of
$\Sigma$, one of the simplest tests is that of Nagao \cite{MR0339405},
which is based on the Frobenius norm of the difference between the
sample covariance matrix $\widehat\Sigma$ and the identity matrix
$I$.
Nagao's test is based on the test statistic
\[
\frac{1}n \|\widehat\Sigma- I\|_F^2 =
\frac{1}n \trace\bigl[(\widehat \Sigma- I)^2\bigr].
\]
We also refer to Schott \cite{MR2234197}, who (like us) assumes that the
variables have unit variance under the alternative hypothesis.
Ledoit and Wolf \cite{MR1926169} show that this test is not always
consistent against fixed alternatives.
They, and others including Srivastava \cite{MR2328427}, Fisher
\cite{Fisher11} and Chen, Zhang
and Zhong \cite{MR2724863}, suggest variants based on consistent
estimates for the Frobenius norm $\trace[(\Sigma- I)^2]$.

Given that we know that the variances are equal to 1 and the
correlations are non-negative under the alternative hypothesis, it is
more natural to consider the test that rejects for large\vspace*{1pt} values of
$\sum_{i < j} \widehat{\sigma}_{i,j}$, where $\widehat\Sigma=
(\widehat{\sigma}_{i,j})$. For simplicity, we consider instead the
\textsl{squared-sum test} that rejects for large values of the test statistic
%
\begin{equation}
\label{squared-sum} Y = \sum_{t=1}^m \Biggl(
\sum_{i=1}^n X_{t,i}
\Biggr)^2 = m \sum_{i,j
= 1}^n
\widehat{\sigma}_{i,j} .
\end{equation}
The two tests are thus closely related.
In fact, one may easily check that they have similar asymptotic power
properties. Our preference for the second test is only for convenience.

The following result gives a simple characterization of the
performance of the squared-sum test. Since the test does not use
information about the class $\cC$, its minimax risk does not depend
on the model either.

\begin{prp} \label{prp:squared-sum}
The squared-sum test that rejects $H_0$ when
$Y \ge n (m + a \sqrt{m})$ is asymptotically powerful when
$a \to\infty$ such that
$a \le\frac{1}2 \sqrt{m}$ and $\Rho\sqrt{m} k^2/n \ge5 a$.
The squared-sum test with any threshold value for $Y$ is asymptotically
powerless
when $\Rho\sqrt{m} k^2/n \to0$.
\end{prp}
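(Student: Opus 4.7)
The plan is to reduce both directions to moments of $Y$ and of the $Y$-marginal likelihood ratio. Because $X$ is jointly Gaussian, each row sum $W_t:=\sum_{i=1}^n X_{t,i}$ is centered Gaussian; under $H_0$ it has variance $n$, and under an alternative indexed by $M$ with anomalous set $S$ it has variance $v:=n+\sum_{i\neq j\in S}\sigma_{i,j}=n+k(k-1)\Rho$. Consequently $Y/n\sim\chi_m^2$ under the null and $Y/v\sim\chi_m^2$ under every alternative, so $\E_0 Y=nm$, $\Var_0 Y=2n^2m$, $\E_M Y=mv$, $\Var_M Y=2mv^2$, and the law of $Y$ under $\PROB_M$ depends on $M$ only through $\Rho$.

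For the powerful direction, set $\tau:=nm+m^{3/4}k\sqrt{n\Rho}$ and write $\alpha:=\Rho\sqrt m\,k^2/n\to\infty$. Chebyshev under $H_0$ gives
\[
\PROB_0\{Y\ge\tau\}\le \frac{2n^2m}{(\tau-nm)^2}=\frac{2}{\alpha}\longrightarrow 0.
\]
Under $\PROB_M$ the mean gap $\E_M Y-\tau=mk(k-1)\Rho-m^{3/4}k\sqrt{n\Rho}$ is of order $mk^2\Rho$ since its ratio to the leading term tends to $1-\alpha^{-1/2}$. Chebyshev again yields
\[
\PROB_M\{Y<\tau\}\lesssim \frac{mv^2}{(mk^2\Rho)^2}=\frac{1}{\alpha^{2}}\bigl(1+\alpha/\sqrt m\bigr)^{2},
\]
which vanishes whenever $\alpha/\sqrt m\to 0$ or $m\to\infty$. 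In the remaining corner case (bounded $m$ with $\alpha/\sqrt m\to\infty$) Chebyshev is too lossy, but $\tau/v\to 0$ and the chi-squared lower tail of $Y/v$ is exponentially small there.

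For the powerless direction, the worst-case risk of any test measurable in $Y$ is at least $1-\operatorname{TV}(\PROB_0^Y,\PROB_M^Y)\ge 1-\tfrac12\sqrt{\E_0 L^2-1}$, where $L$ denotes the $Y$-marginal likelihood ratio. Letting $\epsilon:=k(k-1)\Rho/n$, the densities of $n\chi_m^2$ and $v\chi_m^2$ differ only by scale, so $L(y)=(n/v)^{m/2}\exp\bigl(y(v-n)/(2nv)\bigr)$, and a direct chi-squared MGF computation gives the closed form
\[
\E_0 L^2=(1-\epsilon^2)^{-m/2}.
\]
The assumption $\alpha\to 0$ is equivalent to $m\epsilon^2\to 0$, so $\E_0 L^2\to 1$, TV$\to 0$, and every thresholding rule based on $Y$ has worst-case risk tending to $1$, uniformly over the model.

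The only step I expect any real friction with is the Chebyshev bound under $\PROB_M$ in the powerful direction when $m$ is bounded and $\alpha/\sqrt m\to\infty$: the alternative variance $\Var_M Y$ then dwarfs $\Var_0 Y$ and the naive variance-over-gap-squared ratio leaves a residual of order $1/m$ that does not decay. Replacing Chebyshev by a standard chi-squared lower-tail estimate at that step removes the obstacle; the rest is routine Gaussian/chi-squared calculus.
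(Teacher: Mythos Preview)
Your proof is correct. The powerful direction follows the same chi-squared reduction as the paper (both note $Y/n\sim\chi_m^2$ under $H_0$ and $Y/v\sim\chi_m^2$ under the alternative with $v=n+k(k-1)\Rho$), though you argue via Chebyshev while the paper compares two chi-squared quantile bounds and checks the gap $(n+\Rho k(k-1))(m-\sqrt{am})-n(m+\sqrt{am})$ directly. You are actually more careful than the paper in the corner case of bounded $m$ with $\alpha\to\infty$: there the paper's gap expression $n\sqrt m(a+o(a))$ is not literally correct (the term $\Rho k(k-1)\sqrt{am}$ can dominate), and your explicit observation that $\tau/v\to 0$ and $\PROB\{\chi_m^2<\tau/v\}\to 0$ is the clean way to close that case.

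For the powerless direction you take a genuinely different route. The paper splits into $m$ fixed (convergence in law to $\chi_m^2$) and $m\to\infty$ (CLT plus Slutsky on $(Y/n-m)/\sqrt{2m}$). Your approach instead computes the $Y$-marginal likelihood ratio in closed form and shows $\E_0 L^2=(1-\epsilon^2)^{-m/2}\to 1$ since $m\epsilon^2\sim\alpha^2\to 0$, hence total variation vanishes. This is more quantitative, avoids the case split, and gives a uniform rate; the paper's argument is slightly more elementary but less sharp. Both are valid.
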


To interpret this result, note that in a very high-dimensional setting
(i.e., when $m$ of smaller order than any positive power of $n$), and
with constant correlation (i.e., $\Rho$ is a numerical constant), the
proposition states that the squared-sum test is powerful in a
non-sparse regime where $k \gg\sqrt{n}$. On the other hand in sparse
situations with constant correlation, the sample size must be very
large for the squared-sum test to be powerful, as one needs $m \gg
n^2/k^4$. We will see in the next section that in this sparse regime
other tests perform much better.
\begin{pf*}{Proof of Proposition \ref{prp:squared-sum}}
Under the null hypothesis, $Y \sim n \chi^2_m$, and therefore
\[
\P_0\bigl(Y > n (m + a \sqrt{m})\bigr) \to0.
\]
Under the alternative hypothesis, $Y \sim(1+b) n \chi^2_m$, where $b
:= \Rho
k(k-1)/n$, and hence
\[
\P_1\bigl(Y \le(1+b) n (m - a\sqrt{m})\bigr) \to0,\qquad  n \to\infty.
\]
Using the assumptions on $a$ and $b$, we have
\begin{eqnarray*}
(1+b) n (m - a \sqrt{m}) - n (m + a \sqrt{m}) &\ge& n \sqrt{m} \bigl(b \sqrt{m}
- (2+b) a \bigr)
\\
&\ge& n \sqrt{m} \biggl(\frac{1}2 b\sqrt{m} - 2 a \biggr)
\\
&\ge& n \sqrt{m} \frac{a}2 > 0,
\end{eqnarray*}
and therefore
the test with critical value $n (m + a \sqrt{m})$ is asymptotically powerful.

Suppose that $b \sqrt{m} \to0$. We still have that $Y/n \sim
\chi^2_m$ under $H_0$ while $Y/n \sim(1+b) \chi^2_m$ under $H_1$. If
$m$ is fixed, $Y/n$ is asymptotically $\chi^2_m$ under the alternative
hypothesis
since $b \to0$ in this case. If $m \to\infty$, $(Y/n -
m)/\sqrt{2m}$ is asymptotically standard normal under both the null
and the alternative hypotheses, since under $H_1$,
\[
\frac{Y/n - m}{\sqrt{2m}} = \frac{Z - m}{\sqrt{2 m}} + \frac{b
Z}{\sqrt{2 m}} =
\frac{Z - m}{\sqrt{2 m}} + \OO_P(b \sqrt{m}),
\]
where $Z = Y/(n (1+b)) \sim\chi^2_m = \OO_P(m)$. We may apply Slutsky's
theorem to conclude.
\end{pf*}

\subsection{A localized squared-sum test}
\label{sec:local-squared-sum}

When $k$ is smaller, global tests such as the squared-sum test are not
very powerful. The generalized likelihood ratio
test ``scans'' over all subsets $S$ in the class $\cC$.
Instead of studying the generalized likelihood ratio test,
we consider a localized version of the
squared-sum test that has similar power and is a little easier to
analyze. The \textsl{localized squared-sum test} rejects the
null hypothesis for large
values of
the test statistic
\[
Y_\mathrm{scan} = \max_{S \in\cC} Y_S, \qquad \mbox{where } Y_S = \sum_{t=1}^m
\biggl(\sum_{i \in S} X_{t,i}
\biggr)^2 = m \sum_{i,j \in S} \widehat{
\sigma}_{i,j} .
\]
The following result gives sufficient conditions for the test to
be asymptotically powerful. The conditions are in terms of the
cardinality of the class $\cC$. Sharper bounds that take into account
the fine metric structure of $\cC$ are also possible by more careful
bounding of the distribution of $Y_\mathrm{scan}$ under the
null hypothesis. However, as we will see below, this bound is already
quite sharp for the specific classes considered in this paper,
and to preserve relative simplicity of the arguments we do not
pursue sharper bounds here.

\begin{prp} \label{prp:local-squared-sum}
The localized squared-sum test that rejects the null hypothesis
if
$
Y_\mathrm{scan} > b k m$,
is asymptotically powerful when $\Rho k \ge3 (b-1)$,
$m\to\infty$,
and either
\[
\frac{\log|\cC|}m \to0 \quad \mbox{and}\quad  b \ge1 + \sqrt{\frac{5\log
|\cC|}m} ,
\]
or
\[
\frac{\log|\cC|}m \to\infty\quad \mbox{and}\quad  b \ge\frac{3 \log|\cC
|}m .
\]
\end{prp}

Note that the proposition makes a distinction between a \emph
{ultra-high} dimensional setting where $m \ll\log|\cC|$, and a
(potentially) high dimensional setting where $m \gg\log|\cC|$. In
the former case, the result states that $k$ needs to be large enough
for the localized squared-sum test to be powerful. On the other hand in
the other less extreme regime the correlation $\Rho$ can be as small
as $\frac{1}{k} \sqrt{\frac{\log|\cC|}m}$.

In particular, in the sparse regime where $k$ is a constant the result
above ensures that asymptotic power is achieved for $\Rho$ as small as
$\sqrt{\frac{\log n}{m}}$. In the non-sparse regime, for large
classes $\cC$ (i.e., such that $\log|\cC|$ is roughly $k \log n$),
asymptotic power is achieved in the ultra-high dimensional setting
(that is $m \ll k \log n$) for $\Rho$ as small as $\frac{\log n}{m}$,
while in the high dimensional setting (that is $m \gg k \log n$) it is
achieved for $\Rho$ as small as $\sqrt{\frac{\log n}{k m}}$.

Other consequences of this proposition will be discussed in the next sections.
\begin{pf*}{Proof of Proposition \ref{prp:local-squared-sum}}
Observe that under the null hypothesis $Y_S \sim k \chi^2_m$ for all
$S \in\cC$. By a simple Chernoff bound for the chi-square distribution,
for all $b > 1$,
%
\begin{equation}
\label{chi} \P \bigl(\chi_m^2 > b m \bigr) \le\exp
\biggl( - \frac{m}2 H(b) \biggr),
\end{equation}
where $H(b) := b - 1 - \log b$ for $b > 1$. Hence, by the union bound,
%
\begin{equation}
\label{lss1} \P_0(Y_\mathrm{scan} > b k m) \le|\cC| \exp
\biggl( - \frac{m}2 H(b) \biggr).
\end{equation}
%
When $(\log|\cC|)/m \to0$, using the fact that $H(b) \sim(b-1)^2/2$
when $b \to1$,
we see that the right-hand side in \eqref{lss1} tends to zero when $b
\ge1 + \sqrt{5 \log|\cC|/m}$.
When $(\log|\cC|)/m \to\infty$, using the fact that $H(b) \sim b$
when $b \to\infty$, so the right-hand side in \eqref{lss1} tends to
zero when $b \ge3 \log|\cC|/m$.

Under the alternative hypothesis where $S$ is anomalous -- that is,
under $\PROB_S$ --, $Y_S \sim(k + \Rho k (k-1)) \chi^2_m$, and therefore
%
\begin{equation}
\label{Yscan-H1} Y_\mathrm{scan} \ge Y_S > km \bigl(1 +
\Rho(k-1)\bigr) \bigl(1 - \OO_P(1/\sqrt{m})\bigr).
\end{equation}

Hence, the test is asymptotically powerful when
$(1 + \Rho(k-1)) (1 - \OO_P(1/\sqrt{m})) > b$,
and we can check that $\Rho k \ge3 (b-1)$ implies this in both regimes.
\end{pf*}

The case when
$\log|\cC|/m \asymp1$
can be dealt with in the same way, yielding that, with a proper choice
of threshold, the localized squared-sum test is asymptotically powerful when
%
\begin{equation}
\label{rho-local} \Rho k \ge A \max \biggl(\sqrt{\frac{\log|\cC|}m},
\frac{\log|\cC|}m \biggr) ,
\end{equation}
for a sufficiently large constant $A$.

When the class $\cC$ is large (i.e., has size exponential in $k$), the
test statistic $Y_\mathrm{scan}$ may be difficult to compute as it
involves solving
a nontrivial combinatorial optimization problem. This is the case
for the clique model (unless $k$ is very small) and the matching
model.

\subsection{Maximum correlation test}
\label{sec:pairwise}

Finally, we mention the possibly simplest test that one would think of
when confronted with testing $H_0$ in the sparse regime.
This is the test that rejects for large
values of the maximum pairwise empirical correlation
\[
Y_{\max}= \max_{i \neq j} \sum
_{t=1}^m X_{t,i} X_{t,j} .
\]
In fact, this test does have some power in the sparse regime, and is
actually near-optimal when $k$ is fixed as the following result shows.
However, one cannot expect a good performance of this test for large
values of $k$. An advantage of this test is that it may be computed
efficiently in
a straightforward manner.

\begin{prp} \label{cor:pairwise}
The maximum correlation test that rejects $H_0$ when
$Y_{\max} > \sqrt{5 m \log n}$
is asymptotically powerful when
%
\begin{equation}
\label{pairwise} \Rho\ge\sqrt{5 (\log n)/m} .
\end{equation}
%
\end{prp}

Note that the performance described in this proposition matches the one
of the localized squared-sum test for the sparse regime where $k$ is a
constant. Moreover, the performance is better than the squared-sum test
for $k$ is small enough, precisely when $k^2 \ll\sqrt{\log n} / n$.
%
\begin{pf*}{Proof of Proposition \ref{cor:pairwise}}
Assume that $m \ge5 \log n$ for otherwise the condition for $\Rho$ is vacuous.
For $i \neq j$ fixed, under the null hypothesis, $X_{t,i} X_{t,j}, t=
1, \dots, m$
are i.i.d. with zero mean, unit variance, and finite
moment generating function in a neighborhood of the origin.
In fact, it is equal to $(1 - \lambda^2)^{-1/2}$ for $\lambda\in(-1,1)$.
Hence, by a standard result on moderate deviations, see, for example, Dembo
and Zeitouni \cite{MR2571413}, Theorem 3.7.1,
\[
\limsup_{m \to\infty} \frac{m}{b_m^2} \log\P_0 \Biggl
\{\sum_{t=1}^m X_{t,i}
X_{t,j} > b_m \Biggr\} \le -\frac{1}2
\]
for any sequence $(b_m)$ such that $\sqrt{m}= \oo(b_m)$ and $b_m/m\to0$.
We choose $b_m = \sqrt{5 m \log n}$ and use the union bound, to get
\[
\P_0\{Y_{\max} > \sqrt{5 m \log n}\} \le{n \choose2}
\P_0 \Biggl\{\sum_{t=1}^m
X_{t,i} X_{t,j} > b_m \Biggr\} \to0 .
\]
Under the alternative hypothesis when $S \subset[n]$ is anomalous,
pick $i \neq j$ in $S$ such that $X_{t,i} X_{t,j}, t= 1, \dots, m$ are
i.i.d. with
mean larger than $\Rho$ and variance smaller than $2$, and by
Chebyshev's inequality,
\[
\sum_{t=1}^m X_{t,i}
X_{t,j} = m \Rho+ \OO_P(\sqrt{m}) .
\]
From this, the result follows immediately.
%
\end{pf*}

%
%
%
%
%

\section{Clique model}
\label{sec:clique}

In this section, we discuss the implications of the general results of
the previous sections for the clique model.
We derive a lower bound based on Theorem~\ref{thm:lower} in various
ranges of
the parameters and compare it with the performance bounds for the
squared-sum test and the scan statistics-based test considered in
Section~\ref{sec:tests}.
We also propose a goodness-of-fit test for the case where $\rho\to1$,
and consider two alternative tests that take computational
considerations into account.
A~digest is provided at the end of the section.

\subsection{Lower bounds for the clique model}
\label{sec:clique-lower}

In order to apply Theorem~\ref{thm:lower}, note that in the clique model,
$Z$ has hypergeometric distribution with parameters $(n,k,k)$,
which is stochastically bounded by the binomial distribution with
parameters $(k, p)$, where $p := k/(n-k)$.
In particular, for all $\xi\ge0$,
%
\begin{equation}
\label{mgf} \E\exp(\xi Z) \le\bigl(1 -p + p \mathrm{e}^{\xi}
\bigr)^k,
\end{equation}
by Bennett's inequality, for all $t >1$,
%
\begin{equation}
\label{ben} \P (Z \ge t k p ) \leq\exp \biggl(- \frac{k p}{1 -p} H(t)
\biggr),
\end{equation}
where $H(t) := t (\log t -1) + 1$.
Note that $H(t) \sim t^2/2$ when $t \to0$ and $H(t) \sim t \log t$
when $t \to\infty$.

We distinguish various regimes of the parameters in which the
minimax risk and the optimal test behave differently.
Note that Theorem~\ref{thm:lower} implies that reliable detection is impossible
(i.e., $R^* \to1$)
whenever one can choose $a$ such that $a\to\infty$ and
either
$\limsup\E [\cosh^m(\xi_a Z) ] \le1$ or
$\limsup\E\exp(m \nu_a Z) \le1$. This is what we do in all cases
listed below.

\noindent\textit{Case} 1: \textit{large $k$.}
Suppose that $k$ is so large and $\rho$ is so small that
%
\begin{equation}
\label{C1} \frac{k}{n} \to0 \quad \mbox{and}\quad  \frac{k^2}{n} \to\infty
\quad \mbox{and}\quad  \rho\sqrt{m} \frac{k^2}{n} \to0.
\end{equation}
We first note that these conditions imply that $\rho\to0$.
Let $\zeta= \rho\sqrt{m} k^2/n$ and choose $a,b \to\infty$ such
that $a^2 b \zeta\to0$. When $Z \le b k^2/n$, we use the fact that
$\rho a^2 b Z \to0$ and
$\cosh(x) \le1 + x^2$ when $x \in(0,1)$ to get that for all
sufficiently large $n$,
\[
\cosh^m(\xi_a Z) \le \bigl(1 + (\xi_a
Z)^2 \bigr)^m \leq\exp\bigl(m \xi_a^2
Z^2\bigr) 
= 1 + \oo(1), 
\]
since $\sqrt{m} \xi_a Z \le\sqrt{m} \frac{\rho a^2}{1-\rho^2}
\frac{b k^2}n = \OO(\zeta a^2 b) = \oo(1)$.

We now show that, if, in addition to \eqref{C1}, we have either $\rho
m \to0$ or $\rho^2mk \to0$, then
\[
\E \bigl[\cosh^m(\xi_a Z) \IND{Z > b k^2/n}
\bigr] = \oo(1).
\]
This implies that reliable detection is impossible
in this range of the parameters.

\noindent\textit{Case} 1(a).
In addition to \eqref{C1}, assume
%
\begin{equation}
\label{C1a} \rho m \to0 . 
\end{equation}
We choose $a$ such that $a^2 \rho m \to0$.
We use the bound $\cosh(x) \le\exp(x)$ and \eqref{ben}, to get
\[
\E \bigl[\cosh^m(\xi_a Z) \IND{Z > b k^2/n}
\bigr] \le\sum_{z > b k^2/n} \exp \biggl(m \xi_a
z - \frac{k p}{1 -p} H \biggl(\frac{z}{kp} \biggr) \biggr),
\]
where $p = k/(n-k)$.
We have $m \xi_a \sim m \rho a^2 = \oo(1)$ and $\frac{k p}{1 -p} H
(\frac{z}{kp} ) \sim z \log (\frac{z}{k^2/n} ) \ge
z \log(b)$\vspace*{-4pt} uniformly over $z > b k^2/n$.
Hence, eventually,
\begin{eqnarray*}
\E \bigl[\cosh^m(\xi_a Z) \IND{Z > b k^2/n}
\bigr] &\le&\sum_{z > b k^2/n} \exp \biggl(- \frac{1}2
z \log(b) \biggr)
\\
& \sim&\exp \biggl(- \frac{1}2 b \bigl(k^2/n\bigr) \log(b)
\biggr) = \oo(1) .
\end{eqnarray*}

\noindent\textit{Case} 1(b).
In addition to \eqref{C1}, assume
%
\begin{equation}
\label{C1b} \rho^2 m k \to0 . 
\end{equation}
We may assume that $k \leq m$ for otherwise $\rho m \to0$, which we
already covered. We
choose $a$ such that $a^2 \rho\sqrt{mk} \to0$ -- which implies in
particular that $a^2 \rho k \to0$. We use the bounds $\cosh(x) \le1
+ x^2$ for $x \in[0,1]$, the fact that $Z \le k$ -- since $Z = |S \cap
S'|$ with $|S| = |S'| = k$ -- and the fact that $a^2 \rho k \to0$, to get
\[
\cosh^m(\xi_a Z) \le \bigl(1 + (\xi_a
Z)^2 \bigr)^m \le\exp\bigl(m \xi_a^2
Z^2\bigr) \le\exp\bigl(2 a^4 \rho^2 m
Z^2\bigr) \le\exp\bigl(2 a^4 \rho^2 m k Z
\bigr),
\]
eventually. Since $a^4 \rho^2 m k = \oo(1)$, we apply \eqref{ben} and
proceed exactly as before, reaching the same conclusion.

\noindent\textit{Case} 2: \textit{small $k$}, \textit{moderate $m$}. Suppose
%
\begin{equation}
\label{C2} \frac{k^2}{n} \to0 \quad \mbox{and}\quad  \rho\sqrt{\frac{m k}{\log
(n/k^2)}}
\to0 \quad \mbox{and}\quad  \frac{k \log(n/k^2)}{m} \to0.
\end{equation}
Let $\zeta= \rho\sqrt{m k/\log(n/k^2)}$ and choose $a \to\infty$
such that $a \zeta\to0$ and $a^2 \rho k \to0$. The latter is
possible because \eqref{C2} implies that $\rho k \to0$. Then, as in
Case 1(b),
\[
\cosh^m(\xi_a Z) \le\exp\bigl(2 a^4
\rho^2 m k Z\bigr) .
\]
We then use \eqref{mgf} to get
\begin{eqnarray*}
\E\cosh^m(\xi_a Z) &\le& \biggl(1 + \frac{k}{n-k}
\mathrm{e}^{2 a^4 m \rho
^2 k} \biggr)^k
\\
&\le& \exp \biggl(2 \frac{k^2}n \mathrm{e}^{2 a^4 m \rho^2 k} \biggr)
\\
&=& \exp \bigl(2 \exp \bigl(2 a^4 m \rho^2 k - \log
\bigl(n/k^2\bigr) \bigr) \bigr)
\\
&=& 1 + \oo(1)
\end{eqnarray*}
and again, reliable detection is impossible by Theorem~\ref{thm:lower}.

\noindent\textit{Case} 3: \textit{small $k$}, \textit{small $m$}. Suppose
%
\begin{equation}
\label{C3} \frac{k^2}{n} \to0 \quad \mbox{and}\quad  \rho\frac{m}{\log(n/k^2)} \to0
\quad \mbox{and}\quad  \limsup\rho< 1 .
\end{equation}
Hence, there is some $\eps> 0$ fixed such that $\rho< 1 - \eps$.
Let $\zeta= \rho m/\log(n/k^2)$ and choose $a \to\infty$ such that
$a^2 \zeta\to0$. We use the fact that $\cosh(x) \le\exp(x)$, and
use the same bound on the moment generating function of $Z$, to get (eventually)
\begin{eqnarray*}
\E\cosh^m(\xi_a Z) &\le& \E\exp(m \xi_a Z)
\\
&\le& \biggl(1 + \frac{k}{n-k} \mathrm{e}^{m a^2 \rho/\eps} \biggr)^k
\\
&=& \exp \bigl(2 \exp \bigl(a^2 m \rho/\eps- \log
\bigl(n/k^2\bigr) \bigr) \bigr)
\\
&=& 1 + \oo(1), 
\end{eqnarray*}
implying that reliable detection is impossible.

\noindent\textit{Case} 4: \textit{very large $\rho$}.
Suppose
%
\begin{equation}
\label{C4} (1-\rho)^{1/2} \biggl(\frac{n}{k^2}
\biggr)^{1/m} \to\infty\quad \mbox{and}\quad  \rho\to1.
\end{equation}
This is the only situation where we bound
\begin{eqnarray*}
\E\exp(m \nu_a Z) &\le& \biggl(1 + \frac{k}{n-k}
\mathrm{e}^{m \nu_a} \biggr)^k
\\
&\le&\exp \biggl(2 \frac{k^2}n \mathrm{e}^{m \nu_a} \biggr)
\\
&\le&\exp \bigl(2 \exp \bigl(m \bigl(a^2 - \zeta\bigr) \bigr) \bigr)
,
\end{eqnarray*}
where $\zeta:= \frac{1}2 \log(1-\rho^2) + \frac{1}m \log(n/k^2) \to
\infty$ by \eqref{C4}.
Therefore, it suffices to choose $a \to\infty$ such that $a^2 =
\oo(\zeta)$, to have the last expression on the right-hand side tend to
one.

The discussion of these various regimes leads to the following.

\begin{cor} \label{cor:clique-lower}
In the clique model, under
either \eqref{C1} with \eqref{C1a} or \eqref{C1b}, \eqref{C2},
\eqref{C3}, or \eqref{C4}, $R^* \to1$.
\end{cor}

We will see below that \eqref{C1}, combined with either \eqref{C1a}
or \eqref{C1b}, is tight up to logarithmic factors.
This is also the case of \eqref{C2} and \eqref{C3}, unless
$k^2/n \to0$ as a negative power of $n$.
Note also that the result is silent in
the regime when $k^2/n \to0$ and $\rho\sqrt{m} k^2/n \to0$.
However, it is covered by \eqref{C2} when $\log(n/k^2)/k \to0$,
and by \eqref{C3} when $\log(n/k^2)/\sqrt{m} \to0$, so again it is a
matter of logarithmic factors.
The bound \eqref{C4} is also tight up to log factors when $m = \oo(\log n)$.
We mention that the typical exposition
in the detection-of-means literature, for example in Donoho and Jin
\cite{dj04},
avoids the discussion of such fine details
by assuming that $k = n^{\alpha}$ for some
$\alpha\in(0,1)$.

\subsection{Localized squared-sum test}

Next, we take a closer look at the performance of the localized
squared-sum test for the clique model. In this case,
we have $|\cC| = {n \choose k}$ so $\log|\cC| \sim k \log(n/k)$.
Plugging this into \eqref{rho-local}, we see that the
local squared-sum test is asymptotically powerful when
%
\begin{equation}
\label{scan-clique} \Rho\ge A \max \biggl(\frac{ \log(n/k)}m, \sqrt{
\frac{ \log
(n/k)}{km}} \biggr) ,
\end{equation}
and the constant $A$ is large enough.
Based on this and Corollary~\ref{cor:clique-lower},
we conclude that the test is near-optimal
in regimes \eqref{C2} and \eqref{C3}, though only up to a logarithmic
factor if
$k^2/n \to0$ slower than any power of $n$.
It is also near-optimal up to a logarithmic factor in regime \eqref
{C1} when neither \eqref{C1a} nor \eqref{C1b} is satisfied.

However, we do not have such a guarantee in the regime \eqref{C1}
(with either \eqref{C1a} or \eqref{C1b}).
In this range of parameters, it is the squared-sum test that
yields an optimal performance up to a logarithmic factor.
Also, comparing Proposition~\ref{prp:squared-sum} and
Proposition~\ref{prp:local-squared-sum}, we see that the local test
dominates when
$\max (1, (k/m)^{1/2} ) k^{3/2}/n$ tends to zero faster than
$1/\log(n/k)$.


As we mentioned earlier, computing the scan statistic $Y_\mathrm
{scan}$, or even
approximating with enough precision, seems to be fundamentally hard.
In fact, we conjecture that computing \textsl{any} test with a
near-optimal performance is fundamentally hard in some range of the parameters.
We see this is as a challenging and important research problem.

We make some progress in this direction in two ways.
In Section~\ref{sec:maxsum}, we suggest a test that has good
performance and
that is efficiently computable if $m$ is only logarithmic in $n$.
In Section~\ref{sec:SDP}, inspired by recent work of Berthet and
Rigollet \cite{berthet}, we consider a convex relaxation of the
problem following d'Aspremont, El Ghaoui, Jordan and Lanckriet \cite
{aspremont}.

\subsection{The case of \texorpdfstring{$\rho$}{rho} constant}
\label{sec:constantrho}

Assume that $\rho< 1$ is a constant, independent of $n$.
From our previous work \cite{ArBuLu11} in the case of $m=1$, we know
that $R^* \to1$ unless $k^2/n \to\infty$, in which case the
squared-sum test is asymptotically powerful.
Now we learn from Corollary~\ref{cor:clique-lower} \eqref{C3} that
$R^* \to1$
when $k^2/n \to0$ and $m = \oo(\log(n/k^2))$.
Hence, in the case of $\rho$ constant and $n/k^2$ a positive power of
$n$, a sample size $m$ sub-logarithmic in the dimension $n$ is not
enough for reliable
detection, and is qualitatively on par with the case of $m=1$.

The situation changes dramatically when the sample size $m$
becomes at least logarithmic in the dimension $n$. Indeed,
even for $k=2$, both the localized squared-sum test and the
maximum correlation test have a vanishing risk for any constant
value of $\rho$ when $\log(n)/m \to0$.
This reveals an interesting ``phase transition'' occurring when the
sample size is
about logarithmic in the dimension.

\subsection{The case of \texorpdfstring{$\rho$}{rho} tending to 1}
\label{sec:rho1}

The regime in \eqref{C4} does not have a match in either the
squared-sum test or the localized squared-sum test.
It is instead met by a goodness-of-fit test which is a variant of test
proposed and analyzed in our previous work \cite{ArBuLu11} in the same
regime with $m=1$, although the construction here is slightly different.

Here we assume that $\rho\to1$ so fast that
%
\begin{eqnarray}
\label{gof} &\displaystyle \frac{k}{m \log n} \to\infty\quad \mbox{and}\quad  \displaystyle \frac{k^2}n \to0
\quad \mbox {and}\quad  m \le n \quad \mbox{and}&\nonumber\\[-8pt]\\[-8pt]
  &(1-\rho)^{1/2} \biggl(
\displaystyle \frac{n}{k^2} \log n \biggr)^{1/m} \sqrt{m} \to0.&\nonumber
\end{eqnarray}
%
Let $\zeta$ denote the last term tending to zero in \eqref{gof}, and
choose $a \to\infty$ such that $a \max(\zeta,\allowbreak  1/\log n) \to0$.

The test we propose is based on the idea that the variables that are
positively correlated are closer together than the other variables that
are independent of each other.
Take $\eta\to0$ such that
%
\begin{equation}
\label{eta} \eta= \max \bigl( \bigl((m+1) \log(n)/n\bigr)^{1/m}, 2 a
(1-\rho)^{1/2} \sqrt {m} \bigr) ,
\end{equation}
which is possible by \eqref{gof}.
Let $w_1, \dots, w_\ell\in\bbR^m$ be an $\eta$-covering of $B(0, a
\sqrt{m})$ with $\ell\le(a \sqrt{m}/\eta)^{m}$, and let $R_s =
B(w_s, 2 \eta)$.
We count the number of data points in each $R_s$, yielding $B_s = \#\{
i\dvt  X_i \in R_s\}$, and consider the test that rejects for large values
of $\max\{B_s\dvt s=1,\dots,\ell\}$.

Under the null hypothesis, $B_s \sim\operatorname{Bin}(n, p_s)$ where
\[
p_s := (2 \pi)^{-m/2} \int_{R_s}
\mathrm{e}^{-\|x\|^2/2} \,\mathrm{d}x \le p := \eta^m.
\]
A simple combination of Bernstein's inequality and the union bound gives
\[
\max_s B_s \le n p + \sqrt{3 n p \log\ell},
\]
when $\log\ell= \oo(n p)$, which is the case when $m \log(a \sqrt {m}/\eta) = \oo(n \eta^{m})$ and is implied under \eqref{eta}, since
under our assumptions
\begin{eqnarray*}
m \log(a \sqrt{m}/\eta) - n \eta^{m} &\le& m \log(a \sqrt{m}) + \log n
- (m+1) \log n
\\
&=& m \bigl(\log a + \tfrac{1}2 \log m - \log n \bigr) \to-\infty.
\end{eqnarray*}

Under the alternative hypothesis where $S$ is anomalous, we use the
representation \eqref{eq:representation}, to get
\[
\|X_i - \sqrt{\rho} V\| = (1-\rho)^{1/2}
\|Y_i\| ,
\]
with
\[
\P \bigl(\|Y_i\| \le a \sqrt{m} \bigr) \ge1 - \frac{1}{a^2} ,
\]
by Markov's inequality.
Hence, by another application of Markov's inequality,
\[
\PROB \biggl\{\# \bigl\{i \in S\dvt  \|Y_i\| \le a \sqrt{m} \bigr\} \ge
\frac{k}2 \biggr\} \ge1 - \frac{2}{a^2} \to1 .
\]
%
Let $s$ be such that $V \in B(w_s, \eta)$, so by the triangle
inequality, the region $R_s$ contains at least $k/2$ anomalous vectors
$X_i$ with high probability.
Reasoning as in \cite{ArBuLu11}, we deduce that, in that case, $B_s
\ge np + k/2 - \OO_P(\sqrt{n p})$.

Hence, the test is asymptotically powerful when
$\frac{k}2 \ge\sqrt{4 np \log\ell}$, which is the case when
$n \eta^m m \log(a \sqrt{m}/\eta)= \oo(k^2)$.
This is seen easily because when $\eta= 2 (\log(n)/n)^{1/m}$, we have
\begin{eqnarray*}
n \eta^m m \log(a \sqrt{m}/\eta) &\le&(m+1)^2 (\log n)
\bigl(\log(a \sqrt{m}) + \log n \bigr)
\\
&=& \OO(m \log n)^2 = \oo\bigl(k^2\bigr) .
\end{eqnarray*}
Finally, when $\eta= 2 a (1-\rho)^{1/2} \sqrt{m} \ge2 (\log
(n)/n)^{1/m}$, we have
\begin{eqnarray*}
n \eta^m m \log(a \sqrt{m}/\eta) &\le& n \bigl(2 a (1-
\rho)^{1/2} \sqrt{m} \bigr)^{m} m \bigl(\log (a \sqrt{m}) +
\log n \bigr)
\\
&\le& k^2 (4 a \zeta)^{m} =\oo\bigl( k^2\bigr) .
\end{eqnarray*}
\begin{Remark*}
In \eqref{gof}, we really have in mind a setting where $\rho\to1$,
$k = n^\alpha$ with $\alpha< 1/2$ fixed, and $m = \oo(\log n)$, since
the maximum-correlation test is asymptotically powerful at $\rho$
constant when $m \ge C \log n$ and $C$ is sufficiently large.
In that case, comparing with \eqref{C4}, this goodness-of-fit test is
optimal up to a sub-logarithmic factor.
We note that another goodness-of-fit test based on $H_i := X_i/\|X_i\|$
achieves a similar bound, but with $\zeta$ defined as
\[
\zeta= (1-\rho)^{1/2} \biggl(\frac{n}{k^2} \biggr)^{1/(m-2)}
.
\]
\end{Remark*}

\subsection{Balancing detection ability and running time}
\label{sec:maxsum}

Given the often enormous size of data sets that statisticians
need to handle as an every-day practice, it is of great interest
to design computationally efficient, yet near-optimal tests.
In the case of the clique model, this is a highly non-trivial task,
because the class $\C$ has size exponential in $k$ and
computing the localized squared-sum test (or other versions of
the generalized likelihood ratio test and scan statistics) involves
a non-trivial optimization problem over all ${n \choose k}$ elements
of $\C$. In fact, often it seems that small testing risk and
computational efficiency are contradicting terms. In this section,
we show that in at least one non-trivial instance, it is possible
to design a computationally efficient (i.e., computable in time quadratic
in $n$) test that has near optimal risk.

This is the case when the sample size $m$ is (at most) logarithmic in $n$
and $k \sim n^a$ for some $a\in(0,1)$. (Recall from Section~\ref{sec:constantrho}
that this is a quite interesting range of parameters.)

To introduce a family of tests that balance detection
performance and computational complexity, let
$\ell\in\{1, \dots, m\}$ and define
\[
Y(\ell) = \max_{S: |S| = k} \max_{T: |T| =\ell} \sum
_{t\in T} \sum_{i\in S}
X_{t,i} .
\]
Since
\[
Y(\ell) = \max_{T: |T| =\ell} \sum_{i=n-k+1}^n
X_{T, (i)} , \qquad X_{T,
i} := \sum_{t \in T}
X_{t,i},
\]
where $X_{T, (1)} \le\cdots\le X_{T, (n)}$ are the ordered
$X_{T,i}$'s, the statistic $Y(\ell)$ can be computed in
$\OO({m \choose\ell} (n \log(n) k + \ell\log(m)))$
time by first sorting $(X_{T,i}\dvt  i = 1, \dots, n)$
and summing the largest $k$, for all subsets
$T$ of size $\ell$, and then maximizing over these.

For example, when $m\le\log_2 n$, then ${m \choose \ell}\le2^m \le n$
and the test may be computed in time $\OO(n^2\log n)$.
Even when $m \sim C\log n$ for some constant $C>0$, we may
choose $\ell\sim\gamma m$ such that $C\gamma\log(1/\gamma) \le1$.
In that case ${m \choose \ell}\le2^{\ell\log_2 (m/\ell)} \le n$
and again the test may be computed in time $\OO(n^2\log n)$.
The next proposition bounds the risk of the test.
%

\begin{prp} \label{prp:maxsum}
Take $\ell\leq m/7$. The test that rejects $H_0$ when
\[
Y(\ell) > a := \sqrt{2 \ell k \bigl(\ell\log(\mathrm{e} m/\ell) + k \log(\mathrm{e} n/k)\bigr)},
\]
is asymptotically powerful in the clique model when
\[
\Rho\ge3 \biggl(\frac{\log(n/k)}\ell+ \frac{\log(m/\ell
)}k \biggr) .
\]
\end{prp}

\begin{pf}
Since under the null hypothesis
$\sum_{t\in T} \sum_{i\in S} X_{t,i} \sim\cN(0, \ell k)$,
by a standard bound for the maximum of a finite set of
Gaussian variables,
\[
Y(\ell) \le\sqrt{2 \ell k \log{m \choose\ell} {n \choose k}} \le a,
\]
with probability converging to $1$.

Under the alternative hypothesis where $S$ is anomalous, we have
\[
Y(\ell) \ge\sqrt{\Rho} k (Z_{(m-\ell+1)} + \cdots+ Z_{(m)} ) ,
\]
where $Z_{(1)}\le\cdots\le Z_{(m)}$ are the ordered values of
\[
Z_t := \bigl(k + \Rho k (k-1)\bigr)^{-1/2} \sum
_{i \in S} X_{t,i} ,
\]
which are i.i.d. standard normal. Since we assume that $m \to\infty$,
$\P(Z_{(m-\ell+1)} \ge1) \to1$ when $\ell/m \le1/7 < \P
(\cN (0,1) > 1 )$. Hence, $Y(\ell) \ge\sqrt{\Rho} k \ell$ with
probability tending to one under the alternative hypothesis. Therefore,
the test is asymptotically powerful when $\sqrt{\Rho} k \ell> a$,
which follows from the assumptions when $k, \ell\to\infty$.
\end{pf}

In the regime of (\ref{C3}) with $m\sim C\log n$, we see that the test
is optimal up to a constant factor in $\rho$ when $k\sim n^a$ for
some $a<1/2$. In this range of parameters, it seems
hopeless to compute (or even approximate) the local squared-sum test.

However, when $m$ is much larger than logarithmic in $n$, this test
also requires super-polynomial computational time and therefore it is
not useful
in practice. In such cases, one may have to resort to sub-optimal tests
such as the maximum correlation test described in Section~\ref{sec:pairwise}.
It is an important and difficult challenge to find out the possibilities
and limitations of powerful detection taking computational constraints
into account.


\subsection{A convex relaxation} \label{sec:SDP}

In parallel to our work, Berthet and Rigollet \cite{berthet} study a
related problem of detecting a sparse principal component.
The setting there is the same, except for the alternative hypothesis,
where the covariance matrix is of the form $\Sigma= I + \theta v
v^\top$, with $\theta> 0$ and $v$ a unit vector with at most $k$
non-zero components.
They study a test based on $\lambda_k^{\max}(\widehat\Sigma
)$, the largest $k$-sparse eigenvalue of $\widehat\Sigma$, defined as
\[
\lambda_k^{\max}(A) = \max_{|S| = k}
\lambda^{\max}(A_S),
\]
where $A_S$ denotes the principal submatrix of $A$ indexed by $S$ and
$\lambda^{\max}(A)$ the largest eigenvalue of $A$.
This test is, in fact, intimately related to our localized squared-sum
test, as we shall see in the analysis below.
Berthet and Rigollet \cite{berthet} prove that this test -- with a
proper choice of critical value -- is near-optimal.
However, just like our localized squared-sum test, the test of Berthet
and Rigollet \cite{berthet}
is also computationally unfeasible due to the maximization over
${n \choose k}$ sets.
For computational reasons, \cite{berthet} turn to the convex relaxation
of d'Aspremont, El Ghaoui, Jordan and Lanckriet \cite{aspremont}, for
which they also establish a performance bound.

Berthet and Rigollet \cite{berthet} show that, when $n,m \to\infty$,
with high probability under the null hypothesis,
\[
\lambda_k^{\max}(\widehat\Sigma) \le1 + C \max \biggl(
\frac
{k \log(n/k)}m, \sqrt{\frac{k \log(n/k)}m} \biggr),
\]
for a universal constant $C$.
Under the alternative hypothesis where $S$ is anomalous, we have
\[
\lambda_k^{\max}(\widehat\Sigma) \ge\lambda^{\max}(
\widehat\Sigma_S) \ge\frac{1}k \sum
_{i,j \in S} \widehat\sigma_{ij} = \frac{1}{km}
Y_S ,
\]
and in \eqref{Yscan-H1}, we saw that $Y_S \ge km (1 + \Rho(k-1)) (1 -
\OO_P(1/\sqrt{m}))$.
Hence, this test is asymptotically powerful when
%
\begin{equation}
\label{lambdamax} \Rho\ge3 C\max \biggl(\frac{ \log(n/k)}m, \sqrt{
\frac{ \log
(n/k)}{km}} \biggr) ,
\end{equation}
which matches the performance of the localized squared-sum test \eqref
{scan-clique} up to a multiplicative constant.

The semidefinite relaxation of d'Aspremont, El Ghaoui, Jordan and
Lanckriet \cite{aspremont} for $\lambda_k^{\max}$ is
\[
\SDP_k(A) = \max\trace(A Z)\quad  \mbox{subject to } Z \succeq 0,
\trace(Z) = 1, |Z|_1 \le k ,
\]
where the maximum is over all positive semidefinite matrices $Z =
(Z_{st}) \in\bbR^{m \times m}$
and $|Z|_1$ denotes $\sum_{s,t} |Z_{st}|$. The quantity $\SDP_k(A)$ can be computed efficiently as it is a semidefinite program.
Berthet and Rigollet \cite{berthet} suggest to use the test statistic
$\SDP_k(\widehat\Sigma)$.
They show that, when $n,m \to\infty$, with high probability under the
null hypothesis,
\[
\SDP_k(\widehat\Sigma) \le1 + C k \max \biggl(\frac{\log
(n/k)}m,
\sqrt{\frac{\log(n/k)}m} \biggr),
\]
for a universal constant $C$, while under the alternative hypothesis,
\[
\SDP_k(\widehat\Sigma) \ge\lambda_k^{\max}(
\widehat \Sigma) \ge\bigl(1 + \Rho(k-1)\bigr) \bigl(1 - \OO_P(1/
\sqrt{m})\bigr) .
\]
Hence, the test based on the statistic $\SDP_k(\widehat\Sigma
)$ is asymptotically powerful when
%
\begin{equation}
\label{eq:reduc} \Rho\ge3 C\max \biggl(\frac{\log(n/k)}m, \sqrt{
\frac{\log
(n/k)}m} \biggr) .
\end{equation}
This rate matches \eqref{lambdamax} when $(k/m) \log(n/k) \to
\infty$, and is otherwise comparable to what the maximum correlation
test achieves \eqref{pairwise}.
Thus, the relaxed test of Berthet and Rigollet is computationally
efficient and near-optimal when the sample size is of smaller order
that $k\log(n/k)$. Note that this allows one to handle larger values
of $m$ than for the test introduced in Section~\ref{sec:maxsum} where
$m$ had to be at most a constant multiple of $\log n$.


Interestingly, Berthet and Rigollet \cite{berthet} also show that
their analysis of the relaxed test is optimal in the following sense.
If one can improve the rate given in \eqref{eq:reduc} for the relaxed
test, then one obtains an algorithm that improves by an order of
magnitude upon known results for the \emph{hidden clique problem}, see
Alon, Krivelevich and Sudakov \cite{AlKrSu99}. We refer to \cite
{berthet} for a more precise statement.

\subsection{Digest}
In this section, we briefly summarize our findings for the case of the
clique model in simplified regimes. We consider combinations of the
following settings:
\begin{itemize}
\item The sparse regime corresponds to $k \ll n^{1/2 - \epsilon}$ for
some $\epsilon> 0$. The non-sparse regime corresponds to $k \gg\sqrt{n}$.
\item The ultra-high dimensional setting corresponds to $m \ll k \log
n$ \cite{verzelen}, while the (potentially) high dimensional setting
corresponds to $m \gg k \log n$.

\end{itemize}
Our results can be summarized as follows:
\begin{itemize}
\item In the sparse high dimensional setting, detection is impossible
when $\rho\ll\sqrt{ (\frac{\log n}{m k} )}$\vspace*{-4pt} (see \eqref
{C2}). On the other hand when $\rho\gg\sqrt{ (\frac{\log n}{m
k} )}$, the localized squared-sum test is asymptotically powerful.
Furthermore in the extremely sparse case where $k$ is a constant, the
same performance is achieved by the maximum correlation test.
\item In the sparse ultra-high dimensional setting, detection is
impossible when $\rho\ll\min (\frac{\log n}m, 1 )$ (see
\eqref{C3}). This rate is matched by the localized squared-sum test,
and by the computationally efficient test of Berthet and Rigollet \cite
{berthet} based on $\SDP_k$ (see Section~\ref{sec:SDP}).
Furthermore, when $m \sim C \log n$ the test of Section~\ref{sec:maxsum} can also be computed in polynomial time (in $n$) and is
asymptotically powerful for $\rho\gg\frac{\log n}m + \frac{1}k$.
When $m \ll\log n$, detection is impossible when $(1-\rho)^{1/2}
(\frac{n}{k^2} )^{1/m} \to\infty$ (see \eqref{C4}) and the
goodness-of-fit test of Section~\ref{sec:rho1} matches that bound up
to a
sub-logarithmic factor.
\item In the non-sparse regime, the squared sum test is asymptotically
powerful for
$\rho\gg\frac{n}{k^2\sqrt{m}}$.
This rate is optimal (see \eqref{C1}) if either $m$ or $k$ is not too
large (that is either \eqref{C1a} or \eqref{C1b} is satisfied). In
case neither \eqref{C1a} nor \eqref{C1b} is satisfied, the localized
squared-sum test is asymptotically powerful.
Note that in the non-sparse case we can differentiate two regimes for
the value of $k$. If $n^{1/2} \ll k \ll n^{2/3}$, then condition
\eqref{C1b} is more demanding than the last part of condition
\eqref{C1}, while for $n^{2/3} \ll k \ll n$ it is the other way
around.
As a referee kindly pointed out, in the former case one may
further tighten the lower bound and recover missing logarithmic
factors by a more careful bounding of the moment generating function
of $Z^2$.
\end{itemize}

\section{Block model}
\label{sec:block}

Next, we discuss the consequences of our main results for the block model
which serves as a prototypical example of a ``small'' or
``parametric'' class.
We focus on the case where $\rho$ is bounded away from 1.
Specifically, we assume that $\rho\le\rho_0 < 1$, and define $C_0 =
(1 - \rho_0^2)^{-1}$.

In this model, to apply Theorem~\ref{thm:lower}, we may use the
obvious bound
$Z \le k \IND{S \cap S' \neq\emptyset}$.
Noting that $\P(S \cap S' \neq\emptyset) \le2 k/n$, we have
\[
\E\cosh^m(\xi_a Z) \le1 + \frac{2k}{n}
\cosh^m(\xi_a k) .
\]
We distinguish between two main regimes and we show that $R^*\to1$ in both
cases.

\noindent\textit{Case} 1: \textit{moderate $m$}.
Suppose
%
\begin{equation}
\label{B1} \rho k \sqrt{\frac{m}{\log(n/k)}} \to0 \quad \mbox{and}\quad
\frac{k}n \to0 \quad \mbox{and}\quad  \frac{\log(n/k)}{m} \to0 .
\end{equation}
Let $\zeta= \rho k \sqrt{m/\log(n/k)}$ and choose $a \to\infty$ such
that $a^2 \zeta\to0$ and $a^2 \rho k \to0$. The latter is possible
because \eqref{B1} implies that $\rho k \to0$. We use the bound
$\cosh(x) \le1 + x^2$ for $x \in(0,1)$, to get
\[
\cosh^m(k \xi_a) = \bigl(1 + (k \xi_a)^2
\bigr)^m \le\exp\bigl(C_0^2 a^4
\rho^2 k^2 m\bigr) ,
\]
for $n$ sufficiently large.
Then
\[
\frac{2k}{n} \exp\bigl(C_0^2 a^4
\rho^2 k^2 m\bigr) = 2 \exp\bigl(- \log(n/k) \bigl(1 -
C_0^2 a^4 \zeta^2\bigr) \bigr)
\to0 ,
\]
by our assumptions. Theorem~\ref{lower} now implies that reliable detection
is impossible in this range of the parameters.

\noindent\textit{Case} 2: \textit{small $m$}.
Suppose
%
\begin{equation}
\label{B2} \rho k \frac{m}{\log(n/k)} \to0 \quad \mbox{and} \quad \frac{k}n
\to0 .
\end{equation}
Let $\zeta= \rho k m/\log(n/k)$ and choose $a \to\infty$ such that
$a^2 \zeta\to0$. We use the bound $\cosh(x) \le\exp(x)$ to get
\[
\frac{2k}{n} \cosh^m(k \xi_a) \le2 \exp \bigl(-
\log(n/k) \bigl(1 - C_0 a^2 \zeta\bigr) \bigr) \to0 .
\]

This discussion leads to the following.

\begin{cor} \label{cor:block-lower}
In the block model, under either \eqref{B1} or \eqref{B2},
$R^* \to1$.
\end{cor}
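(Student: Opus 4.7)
The plan is to deduce $R^* \to 1$ from the second half of \thmref{lower}, which asserts that $R^* \to 1$ whenever one can find $a \to \infty$ with $\E \cosh^m(\nu_a Z) \to 1$, where $Z = |S \cap S'|$ for two independent uniform draws from the class $\cC$. The block model is particularly tractable here because $\cC$ consists only of $k$-intervals, so the overlap $Z$ is either $0$ (when the intervals are disjoint) or between $1$ and $k$ (when they meet), and a simple counting argument bounds the probability of the latter event by $2k/n$.

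First I would replace $Z$ with $\tilde Z := k\,\1\{S \cap S' \neq \emptyset\}$: since $\cosh$ is even and increasing on $[0,\infty)$, one has $\cosh^m(\nu_a Z) \le \cosh^m(\nu_a \tilde Z) = 1 + \1\{S \cap S' \neq \emptyset\}(\cosh^m(\nu_a k) - 1)$, so taking expectation yields
$$
\E \cosh^m(\nu_a Z) \le 1 + \frac{2k}{n}\cosh^m(\nu_a k).
$$
The remaining task is to choose $a = a(n) \to \infty$, tailored to each regime, so that $(2k/n)\cosh^m(\nu_a k) \to 0$.

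Under \eqref{B1}, set $\zeta := \rho k\sqrt{m/\log(n/k)}$, which tends to $0$ by hypothesis, and select $a \to \infty$ slowly enough that both $a^2 \zeta \to 0$ and $a^2 \rho k \to 0$ (the latter is possible because \eqref{B1} forces $\rho k \to 0$). Then $\nu_a k = O(a^2 \rho k) \to 0$, so the Taylor expansion $\cosh(x) = 1 + x^2/2 + o(x^2)$ yields $\cosh^m(\nu_a k) \le \exp(C_0^2 a^4 \rho^2 k^2 m)$ with $C_0 = (1-\rho_0^2)^{-1}$, and consequently
$$
\frac{2k}{n}\cosh^m(\nu_a k) \le 2\exp\bigl(-\log(n/k)(1 - C_0^2 a^4 \zeta^2)\bigr) \to 0.
$$
Under \eqref{B2}, set $\zeta := \rho k m/\log(n/k)$ and pick $a \to \infty$ with $a^2 \zeta \to 0$. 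Here $\nu_a k$ need not be small, so I would use the cruder bound $\cosh(x) \le e^x$, giving $\cosh^m(\nu_a k) \le \exp(C_0 a^2 \zeta \log(n/k))$, and the analogous estimate completes the argument.

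The only subtle point is the simultaneous selection of $a$: in Case 1 one needs a single sequence $a_n \to \infty$ that kills both $a^2 \zeta$ and $a^2 \rho k$. This is entirely routine given that both $\zeta$ and $\rho k$ tend to zero — e.g.\ $a = \min(\zeta^{-1/3},(\rho k)^{-1/3})$ works — so the main labor is just bookkeeping across the two cases, with no genuine obstacle.
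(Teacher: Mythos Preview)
Your proposal is correct and follows essentially the same route as the paper: bound $Z$ by $k\,\1\{S\cap S'\neq\emptyset\}$, use $\P(S\cap S'\neq\emptyset)\le 2k/n$, and then in each regime choose $a\to\infty$ against the appropriate $\zeta$ and apply either the Taylor bound (Case~\eqref{B1}) or $\cosh(x)\le e^x$ (Case~\eqref{B2}). The bookkeeping, including the observation that \eqref{B1} forces $\rho k\to 0$ so that a single $a$ can kill both $a^2\zeta$ and $a^2\rho k$, matches the paper's argument.
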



In view of Corollary~\ref{cor:block-lower}, the squared-sum test is
near-optimal
for the
block model only when $k \asymp n$. However, the localized squared-sum
test has a much better performance.
We have $|\cC| = n$, and plugging this
into \eqref{rho-local}, we see that the localized squared-sum
test is asymptotically powerful when\vspace*{-1pt}
\[
\Rho k \ge A \max \bigl(\sqrt{(\log n)/m}, (\log n)/m \bigr) ,
\]
for a large enough constant $A > 0$.
With Corollary~\ref{cor:clique-lower}, we conclude that the test is
near-optimal
except in the case where $k/n \to0$ slower than any negative power of
$n$, where the test is optimal up to a logarithmic factor.

\section{Perfect matching model}
\label{sec:matching}

Here we work out the corollaries of our main results for the perfect
matching model. This model illustrates how one may proceed when
the model in question has a non-trivial combinatorial structure.
In order to use Theorem~\ref{thm:lower}, one needs to use the specific
properties of the class.
We focus on the case where $\rho$ is bounded away from 1.
Specifically, we assume that $\rho\le\rho_0 < 1$, and define $C_0 =
(1 - \rho_0^2)^{-1}$.

In the perfect matching model, $Z$ is distributed as the number of
fixed points in a random permutation over $\{1,\ldots,k\}$. It is well
known that\vspace*{-1pt}
\begin{equation}
\label{eq:distribZmatching} \P\{Z = z\} = \frac{1}{z!} \sum
_{s=0}^{k-z} \frac{(-1)^s}{s!} \leq\frac{1}{z!}
\biggl(\frac{1}{\mathrm{e}} + \frac{1}{(k-z+1)!} \biggr) \qquad  \forall z \in\{0,
\ldots, k\}.
\end{equation}
%
We prove that $R^* \to1$ in two main regimes with the help of
Theorem~\ref{thm:lower}. To simplify notation, we
assume that $k$ is even and recall that $n= k^2$ in this model.

\noindent\textit{Case} 1: \textit{small $m$}.
Suppose\vspace*{-1pt}
%
\begin{equation}
\label{M1} \rho\sqrt{k \max(k, m)} \to0 . 
\end{equation}
We choose $a \to\infty$ such that
$a^2 \rho\sqrt{k \max(k, m)} \to0$.
We use the bounds $\cosh(x) \le1 + x^2$ for $x \in(0,1)$ and $Z \le k$,
and the fact that $a^2 \rho k \to0$, to get, for $n$ sufficiently large,
\[
\cosh^m(\xi_a Z) \le\exp\bigl(C_0^2
a^4 \rho^2 m Z^2\bigr) \le\exp
\bigl(C_0^2 a^4 \rho^2 m k Z
\bigr).
\]
Now let $c = C_0^2 a^4 \rho^2 m k$.
Using \eqref{eq:distribZmatching}, one obtains
\begin{eqnarray*}
\E\cosh^m(\xi_a Z) & \leq& \E\exp(c Z)
\\
& \leq& \sum_{z=0}^k \frac{1}{z!}
\biggl(\frac{1}{\mathrm{e}} + \frac
{1}{(k-z+1)!} \biggr) \exp(c z)
\\
& \leq& \exp\bigl(\exp(c) - 1\bigr) + \frac{k+1}{(k/2+1)!} \exp(c k)
\\
& \leq& 1 + \oo(1),
\end{eqnarray*}
because $c \to0$ and $\log[(k/2+1)!] \sim(k/2) \log k$ as $k \to
\infty$.

\noindent\textit{Case} 2: \textit{moderate $m$}.
Suppose
%
\begin{equation}
\label{M2} \frac{\rho m}{\log(\min(k, m))} \to0. 
\end{equation}
We choose $a \to\infty$ such that $a^2 \rho m / \log(\min(k, m))
\to0$.
Using \eqref{eq:distribZmatching}, one obtains
\begin{eqnarray*}
\E\cosh^m(\xi_a Z) & \leq& \E\cosh^m(
\xi_a Z) \IND{Z < k/2} + \P\{Z \geq k/2\} \cosh ^m(
\xi_a k)
\\
& \leq& \sum_{z=0}^{k/2-1} \frac{1}{z!}
\biggl(\frac{1}{\mathrm{e}} + \frac
{1}{(k/2)!} \biggr) \cosh^m(
\xi_a z) + \P\{Z \geq k/2\} \exp(\xi_a m k)
\\
& \leq& \frac{1}{\mathrm{e}} \sum_{z=0}^{+\infty}
\frac{1}{z!} \cosh^m(\xi_a z) + \biggl(
\frac{k}{(k/2)!} + \P\{Z \geq k/2\} \biggr) \exp(\xi_a m k) .
\end{eqnarray*}
Now we take care separately of these last two terms. First, note that
\[
\frac{k}{(k/2)!} + \P\{Z \geq k/2\} \leq\frac{3 k}{(k/2)!} \le\exp\bigl((k/3)
\log k\bigr)
\]
when $k$ is large enough, and since
$\xi_a m k = \OO(a^2 \rho m k) = \oo(k \log k)$ by our choice of $a$, we obtain
\[
\biggl(\frac{k}{(k/2)!} + \P\{Z \geq k/2\} \biggr) \exp(\xi_a m k)
\to0 .
\]
For the other term, the situation is slightly more subtle. Let
$Y$ be a sum of $m$ independent Rademacher random
variables. Using the binomial
identity, it is easy to prove that
\[
\cosh^m(\xi_a z) = \E\exp(\xi_a z Y) ,
\]
and thus we have
\[
\frac{1}{\mathrm{e}} \sum_{z=0}^{+\infty}
\frac{1}{z!} \cosh^m(\xi_a z) = \E \bigl[\exp\bigl(
\exp(\xi_a Y) - 1\bigr) \bigr] .
\]
Now thanks to Hoeffding's inequality, we obtain for any $t>0$,
\[
\E \bigl[\exp\bigl(\exp(\xi_a Y) - 1 \bigr)\bigr] \leq\exp\bigl(\exp(
\xi_a t) - 1\bigr) + \exp\bigl(- t^2\bigr) \exp\bigl(\exp(
\xi_a m) - 1\bigr) .
\]
In particular with $t = m / \log m$, using that
$\xi_a m = \OO(a^2 \rho m) = \oo(\log m)$ by our choice of $a$, this shows that
\[
\E(\exp\bigl(\exp(\xi_a Y) - 1\bigr) = 1 + \oo(1).
\]

This discussion leads to the following.

\begin{cor} \label{cor:matching-lower}
Consider the class of perfect matchings on the complete bipartite graph.
Under either of \eqref{M1}, or \eqref{M2}, $R^* \to1$.
\end{cor}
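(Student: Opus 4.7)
My plan is to apply \thmref{lower} and reduce the problem to exhibiting a sequence $a \to \infty$ for which $\E \cosh^m(\nu_a Z) \to 1$, where $Z$ is the number of edges shared by two independent uniform perfect matchings on $K_{k,k}$. A moment's thought shows that $Z$ has the same distribution as the number of fixed points of a uniformly random permutation of $[k]$, so the classical inclusion--exclusion formula for derangements supplies the bound
\[
\P\{Z=z\} \le \frac{1}{z!}\left(\frac{1}{e} + \frac{1}{(k-z+1)!}\right).
\]
This two-piece bound will be essential: the $1/(e\, z!)$ term is a Poisson$(1)$-type weight that controls moderate values of $Z$ via a clean moment generating function, while the $1/(z!(k-z+1)!)$ correction term decays super-exponentially and will only matter near $z=k$.

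For Case 1 (condition \eqref{M1}) I would choose $a\to \infty$ so that $a^2\rho\sqrt{k\max(k,m)}\to 0$, which forces $\nu_a Z = O(a^2\rho k) \to 0$ uniformly in $Z$. Then the Taylor expansion $\cosh(x) = 1 + x^2/2 + O(x^4)$, combined with $Z^2 \le kZ$, yields $\cosh^m(\nu_a Z) \le \exp(cZ)$ with $c = C_0^2 a^4 \rho^2 mk \to 0$. The desired conclusion follows from a direct computation: the main piece of $\E e^{cZ}$ behaves like $\exp(e^c - 1) = 1 + o(1)$, and the residual from the $1/(k-z+1)!$ tail is at most a $k/(k/2+1)!$ correction multiplied by $e^{ck}$, which is negligible since $\log[(k/2+1)!] \sim (k/2)\log k$ dwarfs $ck$.

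Case 2 (condition \eqref{M2}) is more delicate because now $\rho m$ need not vanish, so neither the quadratic approximation to $\cosh$ nor the crude bound $\cosh(x)\le e^x$ suffices by itself. My plan is to split at $Z = k/2$. The upper tail $\P\{Z \ge k/2\}$ decays factorially, easily absorbing the worst-case bound $\cosh^m(\nu_a k) \le \exp(\nu_a m k)$ since our choice of $a$ makes $\nu_a m k = o(k\log k)$. For the lower-tail contribution $\sum_{z<k/2} \cosh^m(\nu_a z)/(e\, z!)$, the key trick I would invoke is the representation
\[
\cosh^m(\nu_a z) = \E \exp(\nu_a z Y), \qquad Y = \sum_{t=1}^m \eps_t \text{ with } \eps_t \text{ i.i.d.\ Rademacher}.
\]
Swapping the order of summation transforms the target into $\E \exp(\exp(\nu_a Y) - 1)$, after which Hoeffding's inequality with a carefully tuned threshold $t \approx m/\log m$ shows that $\nu_a Y$ is typically $o(1)$, so the inner exponential is $1 + o(1)$ and the outer expectation is $1+o(1)$ as well.

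The main obstacle I anticipate is calibrating $t$ in the Hoeffding step of Case 2: the threshold must simultaneously make $\P\{|Y|>t\}$ small enough to dominate the catastrophic bound $\exp(\exp(\nu_a m)-1)$, and make $\nu_a t$ small enough to keep $\exp(\nu_a t)-1 = o(1)$. Both demands exactly match the logarithmic slack in the hypothesis $\rho m = o(\log\min(k,m))$, so there is essentially no room to spare. The remaining bookkeeping -- verifying that the $(k-z+1)!^{-1}$ tail of the derangement distribution never becomes the dominant contribution, and checking that the choice of $a$ in each case can indeed be made to tend to infinity -- is routine once the right splits are in place.
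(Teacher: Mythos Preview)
Your proposal is correct and follows essentially the same approach as the paper: the same derangement bound on $\P\{Z=z\}$, the same quadratic-in-$Z$ reduction via $Z^2\le kZ$ in Case~1, and in Case~2 the same split at $Z=k/2$ together with the Rademacher representation $\cosh^m(\nu_a z)=\E e^{\nu_a z Y}$ and Hoeffding with threshold $t=m/\log m$. The only differences are cosmetic (the paper groups the $1/(k-z+1)!$ correction on $\{Z<k/2\}$ together with the upper-tail term rather than treating it as separate bookkeeping).
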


It is easy to derive upper bounds for the performance of the
localized squared-sum test in this model. All we need to observe is
that $|\cC| =
k!$ and therefore $\log|\cC| \sim k \log k$ when $k \to\infty$.
Plugging this into \eqref{rho-local}, we see that the local
squared-sum test is asymptotically powerful when
\[
\Rho k \ge A \max \bigl(\sqrt{k \log(k)/m}, k \log(k)/m \bigr) .
\]
Thus ignoring logarithmic factors, the requirement is that
$\Rho\sqrt{m \min(k,m)}$ be large.
Looking at Corollary~\ref{cor:matching-lower}, the complement
of \eqref{M1} \textsl{or} \eqref{M2} corresponds (roughly) to
$\rho\sqrt{k \max(k,m)} \to\infty$
\textsl{and} $\rho m / \log\min(k, m) \to\infty$, which is the
same requirement if we ignore logarithmic factors. Thus the local
squared-sum test is near-optimal.

\section{The clique number of random geometric graphs}
\label{sec:cliquergg}

In this section we describe a, perhaps unexpected, application
of Theorem~\ref{thm:lower}. We use this theorem to derive a lower
bound for
the clique number of random geometric graphs on high-dimensional spheres.

To describe the problem, let $p\in(0,1)$ and let
$Z_1,\ldots,Z_n$ be independent
random vectors, uniformly distributed on the unit sphere
$S_{d-1}=\{x\in\R^d\dvt  \|x\|=1\}$.
A random geometric graph $G(n,d,p)$ is defined by vertex set
$V=\{1,\ldots,n\}$ and vertex $i$ and vertex $j$ are connected
by an edge if and only if $(Z_i,Z_j)\ge t_{p,d}$ where the threshold
value $t_{p,d}$ is such that
\[
\PROB\bigl\{ (Z_1,Z_2) \ge t_{p,d}\bigr\} = p
\]
(i.e., the probability that an edge is present equals $p$).
The \textsl{clique number} $\omega(n,d,p)$ is the size of the largest
clique of $G(n,d,p)$ (i.e., the largest fully connected subset
of vertices). In Devroye, Gy\"orgy, Lugosi and Udina \cite
{DeGyLuUd11} the behavior of the random variable
$\omega(n,d,p)$ is studied for fixed values of $p$ when $n$ is
large and $d=d_n$ grows as a function of $n$. The rate of growth of
$\omega(n,d,p)$
is shown to depend in a crucial way of how fast $d_n$ increases with $n$.
Specifically, the following results are established (and hold with
probability converging to $1$ as $n\to\infty$):
\begin{eqnarray*}
 d_n = \OO(1) \quad & \Rightarrow&\quad
\omega(n,d,p) = \Omega(n),
\\
d_n \to\infty\quad & \Rightarrow&\quad \omega(n,d,p) = \oo(n),
\\
d_n = \oo(\log n) \quad & \Rightarrow&\quad \EXP\omega(n,d,p) = \Omega
\bigl(n^{1-\epsilon}\bigr) \quad \mbox{for all }\epsilon>0,
\\
d_n \ge9 \log^2 n \quad & \Rightarrow&\quad \omega(n,d,p) = \OO
\bigl(\log^3 n\bigr),
\\
d_n/\log^3 n \to\infty\quad  & \Rightarrow&\quad \omega(n,d,p) =
\bigl(2+\oo(1)\bigr)\log_p n ,
\end{eqnarray*} %
where $a_n = \Omega(b_n)$ means that $b_n = \OO(a_n)$.
We see that the clique number behaves in drastically different ways
between $d_n=\oo(\log n)$
--  when $\omega(n,d,p)$ grows almost linearly -- and $d_n \sim\log^2
n$~--
when $\omega(n,d,p)$ has a poly-logarithmic growth at most.

The above-mentioned results leave open the question of where
exactly the ``phase transition'' occurs, and whether
the upper bound in the regime $d_n \sim\log^2 n$ is sharp. In this
section we are able to answer both of these
questions. Below we establish a general lower bound for the
clique number which implies that, perhaps surprisingly,
the phase transition occurs around $\log^2 n$ and that the upper bounds
above cannot be improved in an essential way. We show that
the median of the clique number $\omega(n,d,p)$ is bounded
from below by $\exp(\kappa\log^2 n/d)$ where $\kappa$ is a positive constant
that depends on $p$ only. This implies, for example, that
if $d\sim c\log n$ for some $c>0$, then $\omega(n,d,p)$
grows as a positive power of $n$. On the other hand, even when
$d\sim\log^{2-\epsilon} n$ for any fixed $\epsilon>0$, then
$\omega(n,d,p)$ is much larger than any power of $\log n$.
For the sake of simplicity, we only state the result for the case of
$p=1/2$. The argument is identical for other values of $p$.
%
\begin{thm} \label{thm:clique}
There exist universal constants $c_1,c_2,c_3,c_4>0$ such that
for all $n,d$ such that $d\ge c_1\log(c_2n)$, the median
of the clique number $\omega(n,d,1/2)$ satisfies
\[
\med\bigl(\omega(n,d,1/2)\bigr) \ge c_3 \exp \biggl(
\frac{c_4\log^2  (c_2n )}{d} \biggr) .
\]
One may take $c_1=7/16$, $c_2=16\log2$, $c_3= 1/16$, and $c_4=49/5120$.
In particular,
\[
d \le c_4 \log^{2-\eps} n \quad \mbox{implies} \quad \med\bigl(\omega
(n,d,1/2)\bigr) = \Omega\bigl(\exp\bigl( \log^\eps n \bigr)\bigr) .
\]
\end{thm}
\begin{pf}
The basic idea of the proof is to define a test that works well
whenever the median clique number is small. But then the lower bound of
Theorem~\ref{thm:lower} implies that the clique number cannot be small.

Let $\omega_0 = \med(\omega(n,d,1/2))$ for short.
Consider the clique model with $m = d$, all nonzero correlations equal
to $\rho$ and $k=16\omega_0$. For $i=1,\ldots,n$, let
$X^{(i)}=(X_{i,1},\ldots,X_{i,d})\in\R^d$,
and define the random geometric graph $G$ on the normalized vectors
$Z_i= X^{(i)}/\|X^{(i)}\|$, connecting points $Z_i$ and $Z_j$ whenever
$(Z_i,Z_j) \ge0$. The test statistic we consider is the clique
number of the resulting graph, denoted by $\omega$. (This test was
suggested and analyzed in \cite{DeGyLuUd11}. Here we combine their
analysis
with Theorem~\ref{thm:lower} to derive a lower bound for the median
clique number.)

Under the null hypothesis (when $\rho= 0$), the $Z_i$'s are i.i.d.
uniform on the sphere $S_{d-1}$ implying that $G \sim G(n,d,1/2)$ and,
consequently, $\omega\sim\omega(n,d,1/2)$.
Devroye, Gy\"orgy, Lugosi and Udina \cite{DeGyLuUd11} show that,
under the alternative hypothesis, with probability at least $7/8$, the
graph contains a clique of size $k$ whenever
%
\begin{equation}
\label{clique1} \binom{k} {2} < (1/8)\mathrm{e}^{d\rho^2/10}.
\end{equation}
When this is the case, the test that accepts the null hypothesis
when $\omega< k$
has a probability of type II error bounded by $1/8$.
To bound the probability of type I error of this test, we first prove
that $\E_0 \omega< 2 \omega_0$ for any $d$ and $n$ sufficiently
large. We start with
\[
\E_0 \omega\ge2 \omega_0 \quad \Leftrightarrow\quad
\E_0 \omega- \omega_0 \ge\tfrac{1}2
\E_0 \omega\quad \Rightarrow\quad \tfrac{1}2 \E_0 \omega\le
\EXP _0 \omega- \omega_0 \le\sqrt{\operatorname{var}(
\omega)},
\]
where in the last step we used the well-known fact that the
difference between the mean and the median of any random variable
is bounded by its standard deviation.
Now observe that $\omega$,\vadjust{\goodbreak} as a function of the independent random
variables $Z_1,\ldots,Z_n$, is a configuration function in the sense
of Talagrand \cite{Tal96b} which implies that $\operatorname{var}(\omega) \le
\EXP_0
\omega$,
see \cite{BoLuMa13}, Corollary~3.8.
We arrive at
\[
\E_0 \omega\ge2 \omega_0 \quad \Rightarrow\quad \tfrac{1}2
\E_0 \omega\le \sqrt{\EXP_0 \omega}\quad  \Leftrightarrow\quad
\E_0 \omega\le4.
\]
However, it is a simple matter to show that $\EXP_0 \omega>4$
for all $d$ if $n$ is sufficiently large.
(To see this it suffices to show that the probability that $5$ random
points form a clique is bounded away from zero. This follows from the
arguments of \cite{DeGyLuUd11}.)
We then bound the probability of type I error as follows
\[
\PROB_0\{ \omega\ge k\} = \PROB_0\{ \omega\ge16
\omega_0\} \le \PROB_0\{ \omega\ge8\EXP_0
\omega\} \le\tfrac{1}{8},
\]
where we used Markov's inequality in the last line.

Combining the bounds on the probabilities of type I and type II errors,
we conclude that $R^* \le1/4$.
Put it another way,
\[
R^* >1/4\quad  \Rightarrow\quad \binom{16\omega_0} {2}
\ge(1/8)\mathrm{e}^{d\rho^2/10}.
\]
Now, by Theorem~\ref{thm:lower}, we see that
\[
(16\omega_0)^2 < 4(\ln2) n\mathrm{e}^{-16\rho d/7}
\quad \Rightarrow \quad R^* >1/4.
\]
We conclude that, for any $\rho\in(0,1)$,
\[
(16\omega_0)^2 < 4(\ln2) n\mathrm{e}^{-16\rho d/7}
\quad \Rightarrow\quad (16\omega _0)^2 \ge(1/4)\mathrm{e}^{d\rho^2/10}.
\]
Therefore, if $\rho$ is such that
$4(\ln2) n\mathrm{e}^{-16\rho d/7} > (1/4)\mathrm{e}^{d\rho^2/10}$, then
$(16\omega_0)^2 \ge(1/4)\mathrm{e}^{d\rho^2/10}$.
Choosing $\rho=(7/(16d))\log((16\log2)n)$ -- which is possible since
$d\ge(7/16)\log((16\log2)n)$~-- clearly
satisfies the required inequality and this choice gives rise to
the announced lower bound.
\end{pf}

\section{Discussion}
\label{sec:discussion}

We close this paper by discussing some open problems and directions
of further research.

\emph{Sharper bounds}.
The cornerstone of our analysis is the lower bound stated in
Theorem~\ref{thm:lower}. It is powerful enough that we can deduce useful
bounds in many different models, which are seen to be optimal up to
constant or logarithmic factors. While a considerable effort
has been devoted in the related
detection-of-means problem for finding the right constants, one
wonders if it is possible to obtain results that fine here, at least
in some regimes. One possible avenue is via the truncated
second moment approach, which underlies the lower bounds
in Ingster \cite{Ingster99}, Donoho and Jin \cite{dj04},
Hall and Jin \cite{hj09}, Butucea and Ingster \cite{ingster}.
The computations are rather
daunting in the setup of this paper and we decided not to take this
route. Note that the second moment approach (without truncation) has
limited applicability, though it is a little more useful here than it
is in the case where $m=1$.

\emph{Comparison with the detection-of-means setting}.
Our results reveal that the dependence on the sample size is different here.
In the detection-of-means setting, one reduces by sufficiency to the
case where $m=1$ by simply averaging the multiple observations and
working with $\bar{X}_1, \dots, \bar{X}_n$, where $\bar{X}_i =
\frac{1}m \sum_t X_{t,i}$. Therefore, if initially $X_{t,i} \sim\cN
(\mu, 1)$ when $i$ is anomalous, we now have $\bar{X}_i \sim\cN(\mu
, 1/m)$.
Therefore, we reduce the problem to where $m=1$ and $\mu$ is replaced
by $\sqrt{m} \mu$.
From this, we know that reliable detection is possible if either
\[
\frac{k^2}n \to\infty\quad \mbox{and}\quad  \mu^2 m \frac{k^2}n
\to\infty,
\]
or
\[
\frac{k^2}n \to0 \quad \mbox{and}\quad  \mu^2 m/\log(n) \ge C ,
\]
where $C$ is a large enough constant.
In our previous work, we argued that, at least when $\rho$ is bounded
away from 1, the parameter $\rho$ in the correlation-detection problem
played a similar role as $\mu^2$ in the detection-of-means setting.
The case when the sample size $m \to\infty$ is, however, quite different
both in the ``dense'' regime $\rho\sqrt{m} \gg n/k^2$, and in the
``intermediary'' regime $\rho\sqrt{m} \succ\sqrt{\log(n/k)/k}$.
We also note that this regime does seem to have an equivalent in the
detection-of-means setting.

\emph{Computational considerations}.
The localized squared-sum test, although near-optimal in some regimes,
is not computationally tractable in the clique model.
Following the footsteps of Berthet and Rigollet \cite{berthet}, we
find that the convex relaxation of
d'Aspremont, El Ghaoui, Jordan and Lanckriet \cite{aspremont} leads
to a computationally-tractable test of comparable performance in the
``sparsest'' regime where $m = \OO(k \log(n))$.
The best performance achievable by tests that can be computed in
polynomial time is yet unknown and remains an intriguing open problem.
The work of Berthet and Rigollet \cite
{berthet,berthet2013computational} (on detecting a sparse first
principal component) provides a possible route via reduction to the
Planted Clique Problem.

\emph{General correlations}.
More generally, the problem of detecting correlations of arbitrary
sign~-- not just positive correlations like we do here -- remains
open. Even though one can design natural tests akin to our squared-sum and
local squared-sum tests for that situation, the challenge is in
deriving tight lower bounds. We mention that our approach to obtaining
a lower bound
in Section~\ref{sec:lower} does not apply here, since the
representation \eqref
{eq:representation} is not valid when the correlations may be negative.

\section*{Acknowledgements}

We would like to thank the anonymous referees for helpful comments and
suggestions.
We also thank Quentin Berthet and Philippe Rigollet for shedding some
light on their results at the \emph{Nonparametric and High-dimensional
Statistics} conference, held in December of 2012, in Luminy, France.
EAC was partially supported by NSF grant DMS-11-20888 and ONR grant
N00014-09-1-0258. GL was supported by the Spanish Ministry of Science
and Technology grant MTM2012-37195.




\printhistory

\end{document}